\newtheorem{theorem}{Theorem}[section]
\newtheorem{lemma}{Lemma}[section]
\newtheorem{prop}{Proposition}[section]
\numberwithin{equation}{section}
\newcommand{\real}{\mathbb{R}}
\def\eqn {\begin{equation}}
\def\eeqn {\end{equation}}
\def\C{{\mathbb C}}
\def\real{{\mathbb R}}
\def\ep{\epsilon}
\def\pa{\partial}
\def\C{\mathcal C}
\def\E{\mathcal E}
\def\F{\mathcal F}
\def\K{\mathcal K}
\def\O{\mathcal O}
\def\ka{\kappa}
\begin{document}
\title{RAPIDLY ROTATING WHITE DWARFS}
\author{Walter A. Strauss} 
\address{Department of Mathematics, Brown University, Providence, RI 02912}
\email{wstrauss@math.brown.edu}
\author{Yilun Wu}
\address{Department of Mathematics, University of Oklahoma, Norman, OK 73019}
\email{allenwu@ou.edu}
\date{}

\begin{abstract}
A rotating star may be modeled as a continuous system of particles attracted to each other by gravity and 
with a given total mass and prescribed angular velocity.   
Mathematically this leads to the Euler-Poisson system.  
A white dwarf star is modeled by a very particular, and rather delicate, equation of state for the pressure as 
a function of the density.  
We prove an existence theorem for rapidly rotating white dwarfs  that 
depend continuously on the speed of rotation.  
The key tool is global continuation theory, combined with a delicate limiting process.  
The solutions form a connected set $\K$ in an appropriate function space. 
As the speed of rotation increases, we prove that either the supports of white dwarfs in $\K$ become unbounded 
or their densities become unbounded.  
We also discuss the polytropic case with the critical exponent $\gamma=4/3$.  

\end{abstract}

\maketitle

\section{Introduction}  
A white dwarf is a very dense remnant of a star that no longer undergoes fusion reactions.  
If it does not rotate, its total mass must be less than the Chandrasekhar limit. 
It resists gravitational collapse due to degenerate electron pressure.  
This leads to the standard equation of state in the basic mathematical model for a white dwarf, 
sometimes called the relativistically degenerate model.  
In this paper we consider a white dwarf that rotates 
about a fixed axis and thereby loses its spherical shape.  
Fixing its mass, we construct a connected set of steady-state rotating solutions.

The pressure $p$ of a white dwarf is given in terms of the density $\rho$  by the formula 
\eqn  \label{eqn of state}
p(\rho)  =  A\int_0^{\rho^{1/3}}  \frac{\sigma^4}{\sqrt{m^2+\sigma^2}}\ d\sigma,    \eeqn 
where $m$ is the mass of an electron and $A$ is a constant. 
The density $\rho$ evolves in time by the compressible Euler-Poisson equations (EP), 
subject to the internal forces of gravity due to the particles themselves.  
The speed $\omega(r)$ of rotation around the $x_3$-axis is allowed to depend on $r=r(x)=\sqrt{x_1^2+x_2^2}$.  
The inertial forces are entirely due to the rotation. In the region $\{x\in\real^3\ \Big|\ \rho(x)>0\}$ 
occupied by the star,  EP reduces to the equation (see Section 4 for details)
\eqn \label{NG} 
\frac 1{|x|}*\rho + \kappa^2\int_0^{r} s\omega^2(s)\,ds  
     -  h(\rho)   + \alpha =0,   \eeqn
where  $\kappa\omega(r)$ is the angular velocity, $\kappa$ is a constant measuring the intensity of rotation, 
 $h$ is the enthalpy defined by $h'(\rho)=\frac{p'(\rho)}{\rho}$ with $h(0)=0$, and $\alpha$ is a constant.  
We have normalized the physical constants. 
The density must vanish at the boundary of the star.

Non-rotating radial (spherically symmetric) white dwarfs were first analyzed by Chandrasekhar \cite{chandrasekhar1931maximum} (see also Chapter XI of \cite{chandrasekhar1939introduction}).  
He proved that there is a maximum mass $M_0$ for a white dwarf to exist. 
Auchmuty and Beals \cite{auchmuty1971variational} proved that for any $M<M_0$, there exists a rotating white dwarf of mass $M$ 
with compact support; it is obtained by minimizing the energy.  
Lieb and Yau \cite{lieb1987chandrasekhar} considered non-rotating white dwarfs as semi-classical limits  
of many quantum particles that are governed by a Schr\"odinger Hamiltonian.  

Our goal in this article is to prove that there is a global connected set of rotating solutions,  That is, 
it contains solutions which have arbitrarily large density somewhere or which have arbitrarily large support.  
They may rotate arbitrarily fast.  The conclusion is stated somewhat informally in the following theorem. 

\begin{theorem}     \label{thm: informal} 
Let $M$ be the mass of the non-rotating solution.  
Assume the pressure $p(\cdot)$ is given by \eqref{eqn of state} 
and the angular velocity $\omega(\cdot)$ satisfies .....
By a ``solution" of the problem for a rotating white dwarf, 
we mean a triple $(\rho, \kappa, \alpha)$, where  
$\rho$ is an axisymmetric function with mass $M$ that satisfies \eqref{NG} and 
$\kappa$ refers to the intensity of rotation speed. 
Then there exists a set $\K$ of solutions satisfying the following three properties. 
\begin{itemize}
\item $\K$ is a connected set in the function space $C_c^1(\real^3)\times \real\times\real$. 
\item $\K$ contains the non-rotating solution. 
\item either 
$$\sup \{ \rho(x)\ \Big| \  x\in\real^3, (\rho,\kappa,\alpha)\in\K \}=\infty$$

or 
$$\sup\{|x|\ \Big| \ \rho(x)>0, (\rho,\kappa,\alpha)\in\K \}=\infty.$$ 
\end{itemize}
The last statement means that 
 either the densities become pointwise unbounded or the supports become unbounded along $\K$.  
  \end{theorem}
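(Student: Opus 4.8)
The plan is to recast the boundary-value problem \eqref{NG} as a fixed-point equation for a compact operator and then to deploy global continuation theory in the rotation parameter $\kappa$. Since $h$ is strictly increasing with $h(0)=0$, I can invert it and write, where the star is present, $\rho = h^{-1}(u)$ with $u = \frac1{|x|}*\rho + \kappa^2\int_0^r s\omega^2(s)\,ds + \alpha$; extending by $\rho=0$ wherever $u\le 0$ collapses \eqref{NG} into a single relation $\rho = \mathcal{T}(\rho,\kappa,\alpha)$, where $\mathcal{T}$ composes the Newtonian potential with the extended inverse enthalpy. Because convolution with $\frac1{|x|}$ gains two derivatives and the admissible densities have compact support, $\mathcal{T}$ is compact on a suitable space of axisymmetric functions (worked on a large ball, since $C_c^1(\real^3)$ is not itself Banach), so the solution set is accessible to Leray--Schauder degree arguments. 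The mass normalization $\int_{\real^3}\rho\,dx = M$ is imposed as a scalar constraint determining $\alpha=\alpha(\rho,\kappa)$, leaving $\kappa$ as the continuation parameter, and at $\kappa=0$ the relation reduces to the Chandrasekhar equation whose mass-$M$ solution is the base point $(\rho_0,0,\alpha_0)$.

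I would then build a connected branch emanating from the non-rotating solution. After verifying that $\mathcal{T}$ is Fr\'echet differentiable at the base point and that $I-D_\rho\mathcal{T}$ is invertible there, the implicit function theorem produces a local curve, and the global continuation theorem for identity-minus-compact maps yields a maximal connected set $\K$ containing $(\rho_0,0,\alpha_0)$ that obeys the usual global alternative: either $\K$ is non-compact in $C_c^1(\real^3)\times\real\times\real$, or it returns to the boundary of the region where the construction is valid.

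The \emph{main obstacle} is that the white dwarf law is borderline. From \eqref{eqn of state} one computes $h(\rho)\sim \rho^{1/3}$ as $\rho\to\infty$, so the inverse enthalpy grows like $u^3$ and the effective adiabatic exponent tends to the critical value $\gamma=4/3$, precisely the Chandrasekhar threshold at which mass ceases to control the density. Consequently $\mathcal{T}$ lacks the uniform compactness and a priori bounds needed to run the continuation directly. The remedy, and the technical heart of the argument, is to approximate $h$ by regularized enthalpies $h_\varepsilon$ (and, if needed, to restrict to balls $B_R$), apply the global continuation theorem to each regularized problem to obtain continua $\K_\varepsilon$, and then pass to the limit $\varepsilon\to 0$. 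Because a limit of connected sets need not be connected, this passage must go through a Whyburn-type lemma together with uniform estimates, and securing those estimates in the presence of the critical exponent is where the difficulty concentrates.

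Finally I would convert the abstract non-compactness of $\K$ into the stated dichotomy. Suppose along $\K$ the densities stayed uniformly bounded in $L^\infty$ and the supports stayed in a fixed ball. Then $\frac1{|x|}*\rho$ is uniformly bounded, and evaluating \eqref{NG} on the rotation axis (where the rotation term vanishes) bounds $\alpha$, after which evaluating off the axis together with the standing hypotheses on $\omega$ bounds $\kappa$. Compactness of $\mathcal{T}$ and elliptic regularity would then upgrade these bounds to precompactness of $\K$, contradicting non-compactness. Hence $\K$ must escape in one of the two admissible ways, giving $\sup\rho=\infty$ or $\sup\{|x|:\rho(x)>0\}=\infty$ along $\K$, which is the desired conclusion.
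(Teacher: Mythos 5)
Your overall skeleton --- invert the enthalpy to obtain a fixed-point equation for a compact operator, run degree-theoretic global continuation from the non-rotating solution, and convert non-compactness of the continuum into the density/support dichotomy --- does match the paper's strategy. But you have misdiagnosed where the difficulty of the white dwarf case actually sits, and the step you wave through is the one the paper spends Sections 2--3 proving. You write ``after verifying that $I-D_\rho\mathcal{T}$ is invertible there'' as if this were routine. It is not: in the mass-constrained formulation (the paper treats $\alpha$ as an additional \emph{unknown} paired with the scalar equation $\int\rho\,dx=M$, rather than solving for $\alpha=\alpha(\rho,\kappa)$ as you suggest --- note the mass constraint does not involve $\alpha$ explicitly, so it cannot simply be ``solved for $\alpha$''), the linearization at $(\rho_0,0,\alpha_0)$ has a candidate kernel element coming from varying the central density, and triviality of the kernel is equivalent to $M'(a)\neq 0$, where $M(a)$ is the mass of the non-rotating star with central enthalpy $a$. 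Proving $M'(a)>0$ for the white dwarf equation of state is the paper's main technical contribution: it requires a Wronskian sign-change analysis for the ODE $w_{rr}+g(w,r)=0$, the virial identity, and a small-$a$ scaling estimate $M(a)\le Ca^{3/4}$. The same monotonicity is needed a second time to exclude the ``loop'' alternative in the global bifurcation trichotomy (a loop would produce two distinct radial non-rotating solutions with equal mass but different central densities), a step your proposal omits entirely. Section 5 of the paper shows that for the exact polytrope $\gamma=4/3$ the analogous nondegeneracy fails and there are \emph{no} slowly rotating solutions of the given mass, which is why this point cannot be finessed.

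Second, your proposed remedy --- regularize the enthalpy to $h_\varepsilon$, build continua $\K_\varepsilon$, and pass to the limit via a Whyburn-type argument --- is neither what the paper does nor necessary. The inverse enthalpy $h^{-1}(t)=(2t+t^2)^{3/2}$ is smooth for $t>0$ and vanishes like $t^{3/2}$ at the origin; once compact support is enforced, compactness of the nonlinear part is elementary, so the ``borderline $4/3$ growth'' does not obstruct the continuation machinery itself. The paper enforces compact support not by truncating to balls but by restricting to the open sets $\O_N=\{\alpha+\kappa^2\sup_x j<-1/N\}$, on which the bracketed potential must become negative for large $|x|$; the resulting continua $\K_N$ are nested, so their union $\K$ is automatically connected and no limit of connected sets is needed. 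Your final dichotomy argument has the right flavor but is too quick: when density and support are both assumed bounded, the paper must first show that it is the norm alternative (not the boundary alternative) that occurs, then that $\kappa$ bounded would force $\alpha\to-\infty$ and hence $\rho\equiv0$ against the mass constraint, and finally it uses the standing hypothesis that $\omega^2$ is not compactly supported to contradict $\kappa\to\infty$ with uniformly bounded supports. Evaluating on the rotation axis does not by itself bound $\alpha$, because the potential term and $\alpha$ can cancel.
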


In \cite{strauss2017steady} we constructed slowly rotating stars with fixed mass. 
In  \cite{strauss2019rapidly} we constructed a global connected set of slowly and rapidly rotating stars for 
a general class of equations of state.  However, the white dwarf case does not fall into this class.  
Keeping the mass constant is a key to our methodology, 
so that there is no loss or gain of particles when the star changes its rotation speed.   
Moreover, we permit a non-uniform angular velocity.  
 
A subtlety of the white dwarf case occurs in the proof that the total mass $M$ 
is a strictly monotone function both of the central density $\rho(0)$ and of the radius $R$ of the star 
in the non-rotating radial case.  We give a self-contained proof of this fact in Section 3.  
It is based on a fundamental lemma given in Section 2.  
The monotonicity is ultimately a consequence of the virial identity and the minimization of the energy.  
In a different context a weaker form of the monotonicity was proven in \cite{lieb1987chandrasekhar}.  
This monotonicity property of the mass is used in two crucial places in our proof in Section 4.    

In Section 4 we use the same basic method as in \cite{strauss2019rapidly}.  
That means we force the total mass $M$ to be fixed  and 
 introduce the constant $\alpha$ as a variable.   
 We get the support to be compact by artificially forcing the parameter $\alpha$ 
 to be sufficiently negative (see Lemma \ref{lem: support bound}).  
Then we begin the construction of rotating star solutions in the standard way by continuation 
from a non-rotating solution ($\kappa= 0$).    
Letting $\kappa$ increase, we continue the construction by applying a global implicit function theorem, 
which is based on the Leray-Schauder degree.  
Later on, in Theorem \ref{thm: main}, we obtain the whole global connected 
set $\K$ of solutions by allowing $\alpha$ to increase.  

The equation of state (1.1) for the white dwarf satisfies $p(\rho) = O(\rho^{4/3})$ as $\rho\to\infty$.  
However, the exact polytropic case $p=\rho^\gamma$ with $\gamma=\frac43$ was also excluded from \cite{strauss2019rapidly} 
because in that case the constant mass condition introduces a non-trivial nullspace of the linearized operator, 
which prevents the employment  of the implicit function theorem.  
Here we supplement our discussion of the white dwarf stars with a discussion of 
the polytropic case $p=\rho^\gamma$ with $\gamma=\frac43$.  
In that case we prove in Section 5 that that there is no slowly uniformly rotating solution at all 
with the given mass $M$.


\section{Preliminaries}
With the physical constants set to be 1, the equation of state is 
\eqn\label{eq: eos}
p(\rho)  =  \int_0^{\rho^{1/3}}  \frac{\sigma^4}{\sqrt{1+\sigma^2}}\ d\sigma.   \eeqn
We write $s=\rho$ for simplicity.  Note that explicit calculations yield 
\eqn  p'(s)=  \frac{s^{2/3}}{3\sqrt{1+s^{2/3}}} ,  \eeqn
\eqn h(s) = \sqrt{1+s^{2/3}} - 1,  \eeqn
\eqn  h'(s) = \frac{p'(s)}{s}  =  \frac1{3 s^{1/3}\ \sqrt{1+s^{2/3}}}.   \eeqn
Writing $t=h(s)\ge0$, we have 
\eqn  \label{def: h inv} h^{-1}(t) =  (2t+t^2)^{3/2},  \eeqn
\eqn   (h^{-1})'(t)  =  3(1+t)\sqrt{2t+t^2}.  \eeqn
 The key properties 
 (2.9)-(2.10) of our paper \cite{strauss2017steady}  are 
 \eqn  \lim_{s\to 0^+} s^{3-\gamma} p'''(s)  =  constant < 0  \eeqn
and 
\eqn  \lim_{s\to\infty}  s^{1-\gamma^*} p'(s) =constant >0.     \eeqn
They are true for $\gamma=5/3$ and  $\gamma^*=4/3$, respectively.    

In Section 3 we will have to study the equation 
\eqn   w_{rr} = g(w,r),  \eeqn 
where 
\eqn   \label{gwr}
g(w,r) = 4\pi rh^{-1}\left(\frac {w_+}r\right)  
=  4\pi r \left(2\frac {w_+}r + \frac{w_+^2}{r^2}\right)^{3/2}  \eeqn 
for $r>0$.  We need to understand how the solution depends on its data at $r=0$. 
This is described in the following lemma.  

\begin{lemma}   For $a>0$, denote by $w(r,a)$ the solution of 
$w_{rr} + g(w,r) =0$ with $w(0,a)=0$ and $w_r(0,a)=a$.  
Assume that for some $a_0>0$ and some $R>0$ 
we have $w(R,a_0)=0$ and $w(r,a_0)>0$ for all $0<r<R$.  
Then there exists $r_0\in(0,R)$ such that  
\eqn w_a>0 \text{ in } (0,r_0), \qquad   w_a<0 \text{ in } (r_0,R).  \eeqn
\end{lemma}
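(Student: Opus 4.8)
The plan is to differentiate the defining ODE with respect to the shooting parameter $a$ and to study the resulting linear (variational) equation for $v:=w_a$. Differentiating $w_{rr}+g(w,r)=0$ in $a$ and using $w(0,a)=0$, $w_r(0,a)=a$ yields
\[ v_{rr}+g_w(w,r)\,v=0,\qquad v(0)=0,\quad v_r(0)=1, \]
where $g_w=\partial g/\partial w=4\pi (h^{-1})'(w/r)>0$ on $(0,R)$, so $v>0$ for small $r$. I would first record two elementary facts about the base solution $w(\cdot,a_0)$. Since $w_{rr}=-g(w,r)<0$ wherever $w>0$, $w$ is strictly concave on $(0,R)$; hence $w_r$ is strictly decreasing with a single zero $r_m\in(0,R)$, and $w_r>0$ on $(0,r_m)$, $w_r<0$ on $(r_m,R)$. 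I would also record the pointwise inequalities that drive everything: writing $t=w/r$, one checks $(h^{-1})'(t)>h^{-1}(t)/t$, which is equivalent to both $g_w>g/w$ and $g_r<0$ on $(0,R)$, and which reduces to the trivial $1+2t>0$.

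To produce a zero of $v$ I would invoke Sturm's comparison theorem: the equation for $v$ has potential $g_w$, that for $w$ has potential $g/w$, and $g_w>g/w$, so $v$ oscillates faster than $w$. As $w>0$ between its consecutive zeros $0$ and $R$, the faster solution $v$ must vanish at some first point $r_0\in(0,R)$. This zero is simple with $v_r(r_0)<0$, for if $v_r(r_0)=0$ then $v\equiv 0$, contradicting $v>0$ on $(0,r_0)$.

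Next I would locate $r_0$ past the maximum of $w$. Using $\phi:=r\,w_r$, a direct computation gives $\phi_{rr}+g_w\phi=-(2g+rg_r)$, and for the Wronskian $P:=v_r\phi-v\phi_r$ one gets $P'=(2g+rg_r)\,v$ with $2g+rg_r>0$ on $(0,R)$ and $P(0)=0$. Hence $P$ is strictly increasing on $(0,r_0)$, so $P(r_0)>0$; but $P(r_0)=v_r(r_0)\,r_0 w_r(r_0)$ with $v_r(r_0)<0$ forces $w_r(r_0)<0$, i.e. $r_0\in(r_m,R)$. In particular $v>0$ on all of $(0,r_m]$.

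The main obstacle is the \emph{uniqueness} of the zero, because Sturm comparison against $w$ only bounds the number of zeros of $v$ from below (as $v$ oscillates faster). To bound it from above I would compare $v$ instead with $w_r$ on $(r_m,R)$, where $w_r<0$ and has no zero. There $w_r$ solves $(w_r)_{rr}+\tilde q\,w_r=0$ with $\tilde q=g_w+g_r/w_r$, and since $g_r<0$ and $w_r<0$ on $(r_m,R)$ one has $\tilde q>g_w$; thus on this interval it is $w_r$ that is the faster oscillator. If $v$ had two zeros in $(r_m,R)$, Sturm's theorem would force $w_r$ to vanish strictly between them, contradicting $w_r<0$ there. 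Hence $v$ has at most one zero in $(r_m,R)$; combined with the existence of $r_0\in(r_m,R)$ and with $v>0$ on $(0,r_m]$, the point $r_0$ is the unique zero. Since $v_r(r_0)<0$ and $v$ cannot return to $0$, we conclude $w_a>0$ on $(0,r_0)$ and $w_a<0$ on $(r_0,R)$, as claimed. The delicate feature is that existence, location, and uniqueness all hinge on the single structural inequality $(h^{-1})'(t)>h^{-1}(t)/t$, which I would verify once at the outset and then reuse.
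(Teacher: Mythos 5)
Your proof is correct and follows essentially the same route as the paper's: the three Sturm/Wronskian comparisons you set up (of $v=w_a$ against $w$, against $rw_r$, and against $w_r$ on the interval where $w_r<0$) are exactly the paper's identities $W(w,z)'=z(g-wg_w)$, $W(rw',z)'=z(rg_r+2g)$, and $W(w',z)'=zg_r$, used with the same three structural inequalities. One small caveat on your closing remark: the location step rests on $2g+rg_r>0$, i.e.\ $t\,(h^{-1})'(t)<3h^{-1}(t)$, which is a separate upper-bound inequality not implied by $(h^{-1})'(t)>h^{-1}(t)/t$ --- though you do state and use it correctly in the argument itself.
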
 

\begin{proof}  The proof is closely related to Lemma 4.9 in \cite{strauss2017steady}.
We calculate 
\eqn   \label{gwgw}
\frac1{4\pi} (g-wg_w) 
=  \left(2\frac wr + \frac{w^2}{r^2}\right)^{1/2}   \left\{-w-\frac2r w^2\right\}  < 0,  \eeqn
which is (4.52) in \cite{strauss2017steady},  
and 
\eqn    \label{gr}
\frac1{4\pi} g_r  =  
\left(2\frac wr + \frac{w^2}{r^2}\right)^{1/2}   \left\{-\frac wr - \frac2{r^2} w^2\right\}  <0, \eeqn
which is (4.53) in \cite{strauss2017steady}. 
Furthermore, calculate 
\eqn     \label{rgr2g} 
 \frac1{4\pi} (rg_r+2g)  =  
\left(2\frac wr + \frac{w^2}{r^2}\right)^{1/2}  \{3w\}  > 0,  \eeqn 
which is weaker than (4.54) in \cite{strauss2017steady}.

Where convenient, we write $\frac{\pa}{\pa r}$ as $'$.  
We define the three auxiliary functions 
\begin{equation}
x(r;a) = rw'(r;a), ~y(r;a)=w'(r;a), ~z(r;a)=w_a(r;a).
\end{equation}
Their values at $r=0$ are 
\begin{equation}
x(0^+;a)=0, ~x'(0^+;a) = w'(0;a)-\lim_{r\to 0^+}rg(w,r)=a.
\end{equation}
\begin{equation}
y(0^+;a) = a,~ y'(0^+;a) = -\lim_{r\to 0^+}g(w,r)=0.
\end{equation}
\begin{equation}\label{phi int}
z(0^+;a) = 0,~z'(0^+;a) =1. 
\end{equation}
Now 
\eqn 
x'' = (rw')'' = rw'''+2w''= r(-g_r-g_ww')-2g=-rg_r-g_w x-2g.  \eeqn
So $x$ satisfies the equation
\begin{equation}
x'' + g_w x +rg_r +2 g=0.
\end{equation}
Similarly, 
\eqn
y'' + g_w y + g_r=0 ,  \quad  z'' + g_wz=0.    \eeqn
The derivatives of various Wronskians are 
\begin{equation}\label{wsk 1}
W(x,z)'=\begin{vmatrix} x &z \\ x'& z'\end{vmatrix}' = z(rg_r+2g).
\end{equation} 
\begin{equation}\label{wsk 2}
W(y,z)'=\begin{vmatrix} y &z \\ y' &z'\end{vmatrix}' = z g_r.
\end{equation} 
\begin{equation}\label{wsk 3}
W(w,z)'=\begin{vmatrix} w &z \\ w'& z'\end{vmatrix}' = z(g-wg_w).
\end{equation}

In the the rest of the proof we set $a$ equal to $a_0$ in all functions. 
Because $w>0$ and $w''=-g<0$ for $r\in (0,R)$, we see that $w$ is a positive concave function 
with a unique maximum and zero boundary values on $[0,R]$. 
By \eqref{phi int}, $z(r)>0$ for $r$ close to $0$. 

We claim that $z$ vanishes somewhere in $(0,R)$.   
On the contrary, suppose that $z(r)>0$ for all $r\in (0,R)$.  
Integrating \eqref{wsk 3} on $(0,R)$ and using the boundary conditions of $w$ and $z$, we have
\begin{equation}\label{eq: contra 1}
-w'(R)z(R) = \int_0^R z(g-wg_w)~dr<0.
\end{equation}
The inequality is a consequence of (2.10).  
However, since $w'(R)<0$ and $z(R)=z(R-)\ge 0$, 
the left side of \eqref{eq: contra 1} is non-negative.  
This contradiction shows that $z$ vanishes somewhere in the open interval.    

Let $r_0$ be the smallest value in $(0,R)$ for which $z(r_0)=0$. 
Integrating \eqref{wsk 1} on $(0,r_0)$, we find 
\begin{align}
x(r_0)z'(r_0)&= \int_0^{r_0}z(rg_r+2g)~dr>0
\end{align}
by \eqref{rgr2g} and the fact that $z(r)>0$ for $r\in (0,r_0)$. 
Since $z'(r_0)<0$, we deduce that $x(r_0)<0$, and hence $w'(r_0)<0$.   

Thus it suffices to show that $z(r)<0$ for all $r_0<r\le R$. 
Again supposing the contrary, let $r_1\in (r_0,R]$ be the first zero of $z$ strictly bigger than $r_0$.   
Integrating \eqref{wsk 2} on $(r_0,r_1)$ and recalling the definition $y=w'$, we obtain 
\begin{equation}\label{eq: contra 2}
w'(r_1)z'(r_1) - w'(r_0)z'(r_0) =y(r_1)z'(r_1) - y(r_0)z'(r_0) = \int_{r_0}^{r_1}z g_r~dr\ge 0.
\end{equation}
The last inequality follows from \eqref{gr} and the fact that $z(r)<0$ for $r\in (r_0,r_1)$.   
However, since $w$ is concave and $w'(r_0)<0$, it must also be the case that $w'(r_1)<0$. 
We also have $z'(r_0)<0$, and $z'(r_1)>0$.   
These conditions together imply that the left side of \eqref{eq: contra 2} is negative.  
This contradiction implies $z(r)<0$ for all $r_0<r\le R$.   
\end{proof}


\section{Monotonicity of the Mass}    \label{M' positive}

For a non-rotating (spherical) star, $\rho(0)$ is the density at its center.  Let $a = h(\rho(0))$. 
Denote the density of this star at any radius $r=|x|$ by $\rho(r;a)$ 
and denote the radius of the star by $R(a)$.   
Defining $u=h(\rho)$, it turns out that $\Delta u + 4\pi h^{-1}(u)=0$ for $r<R(a)$. 
The star's radius $R(a)$ is finite for all $a>0$, as is seen by applying the criterion 
$\int_0^1 h^{-1}(t) t^{-4}~dt = \infty$ of Theorem 1 in \cite{makino1984existence}.
The total mass of the star  is defined as 
\eqn  \label{mass}
M(a) = \int_{\real^3} \rho\ dx  =  4\pi\int_0^{R(a)}  \rho(r;a)\ r^2dr.
\eeqn
So $M'(a) = \int_{B(R(a))} \rho_a(x;a)\ dx$.  
Our goal is to prove the following lemma.  
\begin{lemma} 
$ M'(a)>0$ for all $a>0$.  
\end{lemma}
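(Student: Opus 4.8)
The plan is to reduce the statement to the ODE of Section 2 and then to the strict positivity of a single integral that is controlled by Lemma 2.1 together with a virial identity. First I would pass to the enthalpy variable. Writing $u=h(\rho)$ for the radial profile, one has $u'' + \frac{2}{r}u' + 4\pi h^{-1}(u)=0$ on $(0,R(a))$ with $u(0)=a$, $u'(0)=0$, $u>0$ inside and $u(R(a))=0$. Setting $w=ru$ turns this into $w''+g(w,r)=0$ with $g$ as in \eqref{gwr}, together with $w(0)=0$, $w'(0)=a$ and $w>0$ on $(0,R(a))$. Thus $w=w(r;a)$ is exactly the solution studied in Lemma 2.1, $R(a)$ is its first positive zero, and $z:=w_a$ obeys the sign pattern proved there: $z>0$ on $(0,r_0)$ and $z<0$ on $(r_0,R)$, with $z(R)<0$.

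Next I would write the mass as a boundary flux and differentiate. Integrating the Poisson equation gives, from \eqref{mass}, $M(a)=-R^2u'(R)=-x(R;a)$, where $x=rw'$ as in Section 2. Since $w(R(a),a)=0$ for all $a$, implicit differentiation yields $R'(a)=-z(R)/w'(R)$; moreover $w''(R)=-g(0,R)=0$, so $x'(R)=w'(R)$. Differentiating $M(a)=-x(R(a),a)$ and using $x_a=rz'$ then gives the clean formula
\[
M'(a)=z(R)-Rz'(R).
\]
Equivalently, by the Wronskian identity \eqref{wsk 1} and $W(x,z)(0)=0$, one has $\int_0^R z\,(rg_r+2g)\,dr=W(x,z)(R)=-w'(R)\,M'(a)$, so that
\[
M'(a)=-\frac{1}{w'(R)}\int_0^R z\,(rg_r+2g)\,dr.
\]
Because $w'(R)<0$ while $rg_r+2g>0$ by \eqref{rgr2g}, the entire statement reduces to proving that the single integral $J:=\int_0^R z\,(rg_r+2g)\,dr$ is strictly positive.

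The difficulty is that $z$ changes sign once, so $J>0$ cannot follow from pointwise signs alone; indeed $J$ would vanish identically for the critical polytrope $\gamma=4/3$, for which $rg_r+2g\equiv 0$. The extra input is the virial identity for the profile, reflecting the energy minimization, exactly as anticipated in the introduction. Writing $u=w/r$, the identity \eqref{rgr2g} gives $rg_r+2g=\frac{3}{2+u}\,g$, whose effective index $n_{\mathrm{eff}}(u)=3(1+u)/(2+u)$ stays strictly below $3$ (equivalently, the effective adiabatic exponent stays strictly above $4/3$) at every finite density. To convert this strict subcriticality into $J>0$ I would compare $z$, which solves $z''+g_wz=0$ exactly, with the scaling field $\phi=rw'+cw$, whose defect computes, using \eqref{gwgw} and \eqref{gr}, to
\[
\phi''+g_w\phi=4\pi\rho^{1/3}w\,(2cu+c-3),
\]
sign-definite for a suitable choice of $c$; pairing this defect against $z$ through the Wronskian $W(\phi,z)$, and feeding in the virial relation $3\int p\,dx=\tfrac12\iint \rho(x)\rho(y)|x-y|^{-1}\,dx\,dy$ differentiated along the family, is the mechanism by which I would force $J>0$.

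I expect the last paragraph to be the crux. The reduction to $J>0$ is routine Wronskian bookkeeping, but pinning the sign of $J$ is delicate precisely because the white dwarf sits on the stable side of, yet asymptotically approaches, the critical exponent $\gamma=4/3$; any successful argument must therefore use the strict inequality $n_{\mathrm{eff}}<3$ in an essential, quantitative way rather than through a soft maximum-principle comparison.
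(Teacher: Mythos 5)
Your reduction is correct as far as it goes: the identities $M(a)=-R^2u'(R)=-x(R;a)$, $M'(a)=z(R)-Rz'(R)$, and
\[
M'(a)=-\frac{1}{w'(R)}\int_0^R z\,(rg_r+2g)\,dr
\]
all check out, and they faithfully transfer the problem to the single inequality $J:=\int_0^R z\,(rg_r+2g)\,dr>0$. But that inequality --- which you yourself identify as the crux --- is never proved, and the sketch offered for it does not close the gap. First, a defect $\phi''+g_w\phi=4\pi\rho^{1/3}w\,(2cu+c-3)$ that is \emph{sign-definite} on all of $(0,R)$ (which requires $c\ge 3$ or $c\le 3/(1+2a)$, since $u$ decreases from $a$ to $0$) is useless here: pairing it against the once-sign-changing $z$ via $W(\phi,z)'=-\bigl(\phi''+g_w\phi\bigr)z$ produces an integral of indeterminate sign, which is exactly the difficulty you set out to overcome. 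Second, if one instead makes the natural choice $c=3/(1+2u(r_0))$ so that $2cu+c-3$ changes sign at the same point $r_0$ as $z$, the two factors are then \emph{co-signed}, $W(\phi,z)'\le 0$, and integrating with $W(\phi,z)(0^+)=0$ and $W(\phi,z)(R)=-w'(R)\bigl(M'(a)+c\,z(R)\bigr)$ yields $M'(a)<c\,|z(R)|$ --- an upper bound, the wrong direction. Third, the virial relation ``differentiated along the family'' is invoked but never actually enters any displayed computation, so the one ingredient you correctly sense is indispensable is not in fact deployed.

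The paper's proof avoids estimating $J$ directly and is organized quite differently. It first proves only that $M'(a)\ne 0$: from $\frac{d}{da}\E(\rho(\cdot,a))=\alpha M'(a)$ together with the virial identity, $M'(a)=0$ would force the two orthogonality relations $\int\rho_a\,dx=0$ and $\int k(\rho)\rho_a\,dx=0$ with $k(s)=1-(1+s^{2/3})^{-1/2}$ positive and increasing; since Lemma 2.1 shows $\rho_a$ changes sign exactly once (from positive to negative) while $k(\rho(r))$ is positive and strictly decreasing in $r$, subtracting the constant $k(\rho(r_*))$ at the sign-change point makes $\bigl(k(\rho)-k(\rho(r_*))\bigr)\rho_a>0$ a.e., a contradiction. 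The sign is then pinned down globally by the small-$a$ asymptotics $M(a)\le Ca^{3/4}$ (obtained by rescaling), which rules out $M$ being decreasing. In short: your Wronskian bookkeeping is sound and gives a clean equivalent formulation, but the positivity of $J$ is the entire content of the lemma, and the mechanism you propose for it would not deliver it; the paper gets there by a non-vanishing argument plus an asymptotic sign determination rather than by a direct lower bound.
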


To this end we define the total energy  as 
\eqn   \E(\rho) = \int H(\rho)dx - D(\rho,\rho), 
\quad D(\rho,\rho) = \frac12 \iint \frac{\rho(x)\rho(y)}{|x-y|}dxdy.  \eeqn 
\begin{lemma}  
Any radial solution satisfies the virial identity  
\eqn    \E(\rho) = \int [4H(\rho)-3\rho h(\rho)]\ dx.   \eeqn
\end{lemma}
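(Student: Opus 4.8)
The plan is to exploit the fact that a radial solution of \eqref{NG} (with $\kappa=0$) is a critical point of the energy $\E$ under the constraint of fixed mass, with $\alpha$ playing the role of the Lagrange multiplier, and to test this stationarity against a mass-preserving dilation. Write $U=\frac1{|x|}*\rho$, so that $-\Delta U=4\pi\rho$ and $D(\rho,\rho)=\frac12\int\rho U\,dx$. The equilibrium equation then reads $h(\rho)=U+\alpha$ on the support $\{\rho>0\}$. I take $H$ normalized by $H'(\rho)=h(\rho)$ and $H(0)=0$, so that an integration by parts gives $p=\rho h(\rho)-H(\rho)$ and $\int H\,dx$ is the internal energy whose first variation is $h(\rho)$.

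Next I introduce the dilation: for $\lambda>0$ set $\rho_\lambda(x)=\lambda^3\rho(\lambda x)$, which preserves the mass, $\int\rho_\lambda\,dx=\int\rho\,dx$. A change of variables gives the two homogeneities
\begin{equation}
\int H(\rho_\lambda)\,dx=\lambda^{-3}\int H(\lambda^3\rho)\,dx,\qquad D(\rho_\lambda,\rho_\lambda)=\lambda\,D(\rho,\rho),
\end{equation}
the second because the Newtonian kernel $|x-y|^{-1}$ is homogeneous of degree $-1$. Differentiating $\E(\rho_\lambda)=\int H(\rho_\lambda)\,dx-D(\rho_\lambda,\rho_\lambda)$ at $\lambda=1$ and using $H'=h$,
\begin{equation}
\frac{d}{d\lambda}\E(\rho_\lambda)\Big|_{\lambda=1}=-3\int H(\rho)\,dx+3\int\rho\,h(\rho)\,dx-D(\rho,\rho).
\end{equation}
Stationarity forces the left side to vanish, giving $D(\rho,\rho)=3\int\rho h\,dx-3\int H\,dx$. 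Substituting this into $\E=\int H\,dx-D(\rho,\rho)$ yields
\begin{equation}
\E(\rho)=\int H\,dx-3\int\rho h\,dx+3\int H\,dx=\int\big[4H(\rho)-3\rho h(\rho)\big]\,dx,
\end{equation}
which is the claim.

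The step that requires justification is the vanishing of $\frac{d}{d\lambda}\E(\rho_\lambda)|_{\lambda=1}$, i.e. that $\rho$ is stationary for $\E$ along this mass-preserving family. The generator of the dilation is $\phi=\frac{d}{d\lambda}\rho_\lambda|_{\lambda=1}=\nb\cdot(x\rho)$, and the first variation is $\delta\E(\rho)[\phi]=\int(h(\rho)-U)\phi\,dx$. Since $\phi$ is supported in $\overline{\{\rho>0\}}$, where $h(\rho)-U=\alpha$, this reduces to $\alpha\int\nb\cdot(x\rho)\,dx=0$ because $\rho$ vanishes at the free boundary. The main obstacle is therefore making the integrations by parts rigorous: one must control the decay of $U$ and $\nb U$ at infinity to discard boundary terms in the gravitational integral, and use the $C^1$ regularity of $\rho$ up to the compact free boundary — where $p$ vanishes — so that no surface contribution appears. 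Equivalently, one can bypass the variational language and argue directly from hydrostatic balance $\nb p=\rho\,\nb U$ (the gradient of \eqref{NG}): dotting with $x$ and integrating gives $-3\int p\,dx$ on the left (the boundary term is gone since $p=0$ on $\pa\{\rho>0\}$), while the right side equals $-\tfrac1{8\pi}\int|\nb U|^2\,dx=-D(\rho,\rho)$ after a Pohozaev integration by parts, so that $3\int p\,dx=D(\rho,\rho)$; combined with $p=\rho h-H$ this reproduces the identity.
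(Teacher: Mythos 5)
Your proof is correct, and it reaches the conclusion by a genuinely different (though cousin) route. The paper proves the key intermediate identity $3\int(\rho h(\rho)-H(\rho))\,dx=D(\rho,\rho)$, i.e.\ $3\int p\,dx=D(\rho,\rho)$, by a direct radial Pohozaev computation: setting $u=h(\rho)$ with $\Delta u=-4\pi\rho$, it evaluates $\int_0^\infty r^3\rho u_r\,dr$ in two ways, once via $p=\rho h-H$ and once via an integration by parts on $(r^2u_r)(r^2u_r)_r/r$, which produces $\int|\nabla u|^2\,dx$ and hence $D(\rho,\rho)$. You instead extract the same identity from the variational structure: the energy along the mass-preserving dilation family $\rho_\lambda(x)=\lambda^3\rho(\lambda x)$ is explicitly $\lambda^{-3}\int H(\lambda^3\rho)\,dx-\lambda D(\rho,\rho)$ (using the degree $-1$ homogeneity of the Newtonian kernel), and stationarity at $\lambda=1$ gives $D=3\int\rho h-3\int H$; substitution into $\E=\int H-D$ then yields the lemma in either approach. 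Your justification of the stationarity step is the part that genuinely needs care and you handle it correctly: the generator is $\nabla\cdot(x\rho)$, the Euler--Lagrange factor $h(\rho)-U$ equals the constant $\alpha$ on the support by \eqref{NG} with $\kappa=0$, and the divergence integrates to zero because $\rho$ vanishes on the free boundary; without this the argument would be circular. A small comparative remark: your scaling argument is coordinate-free and does not actually use radial symmetry, only compact support (or sufficient decay) and the non-rotating equation, so it is marginally more general than the paper's radial computation; on the other hand the paper's version is entirely elementary and avoids any appeal to first variations. The ``alternative'' you sketch at the end --- dotting $\nabla p=\rho\nabla U$ with $x$ and doing the Pohozaev integration by parts on $\int \Delta U\,(x\cdot\nabla U)\,dx$ --- is essentially the paper's own proof written in Cartesian rather than radial form, with the correct normalization $\int|\nabla U|^2\,dx=8\pi D(\rho,\rho)$.
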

\begin{proof}  
We have $u=h(\rho)$ in $\Omega=:\{\rho>0\}$ 
and $\Delta u =  \frac1{r^2}(r^2u_r)_r= -4\pi \rho$ in $\real^3$.  
We consider $\rho$ to vanish outside $\Omega$.  
From the latter equation, we have 
\eqn
\int_{\real^3} |\nabla u|^2dx = 4\pi \int_{\real^3} \rho udx  
=  4\pi\int_{\real^3} \rho\left(\frac1{|\cdot|}*\rho\right)dx  =  2\pi D(\rho,\rho).   \eeqn
We therefore  have 
\eqn  \label{r3rho}
 \int_0^\infty r^3\rho h'(\rho) \rho_r dr  =  \int_0^\infty r^3\rho u_r dr  
 =  -\frac1{4\pi} \int_0^\infty r^2u_r (r^2u_r)_r \frac1rdr.   \eeqn 
Integrating by parts, the right side equals 
\eqn
\frac1{8\pi} \int_0^\infty (r^2u_r)^2 \frac1{r^2}dr  =  \frac1{8\pi} \int_0^\infty  u_r^2 r^2dr  
=  \frac1{32\pi^2} \int_{\real^3} |\nabla u|^2dx  =  \frac1{8\pi} D(\rho,\rho).    \eeqn
On the other hand, the left side of \eqref{r3rho} equals 
\eqn
\int_0^\infty  r^3 [\rho h(\rho) - H(\rho)]_rdr = -\int_0^\infty 3r^2 [\rho h(\rho) - H(\rho)]dr  
=  -\frac3{4\pi} \int_{\real^3} (\rho [h(\rho) - H(\rho)] dx.   \eeqn
Combining the last three equations, we have 
\eqn 
3 \int_{\real^3} [\rho (h(\rho) - H(\rho)]dx  =  D(\rho,\rho).  \eeqn
This proves (3.2).  
\end{proof}

{\it Proof of Lemma \ref{mass}.} 
The function  $u(r;a) = h(\rho(r;a))$  defined for $r\le R(a)$
 satisfies $\Delta u + 4\pi h^{-1}(u)=0$, $u(r;a)>0$ and $u_r(r;a)<0$  for $0< r<R(a)$,  
as well as the boundary conditions $u(R(a);a)=0, \ u(0,a)=a$.  
This function $u$  is extended to all of $\real^3$ by solving $\ \Delta u =  -4\pi h^{-1}(u_+)$ in $\real^3$.  
Thus $u$ us harmonic outside the star.   

Now we define $w=ru$.  
this change of variables gives us $\Delta w = g(w;r)$ for $0\le r \le R(a)$,  
where $g$ is defined in \eqref{gwr}.  
Also $w(0,a)=0, \ w_r(0,a)=a,\   w(R(a);a)=0$ and $w(r,a)>0$ for $0<r<R(a)$.    
Therefore Lemma 2.1 is applicable, so that $w_a$ strictly changes sign in the interval $(0,R(a))$.  
Now $w_a=ru_a=rh'(\rho)\rho_a$ and $h'>0$, so that $\rho_a$ 
also strictly changes sign in the interval $(0,R(a))$.  

From the definition of the energy $\E$, we have 
\begin{align}
\frac{d}{da} \E(\rho(\cdot,a))  &=  \int_{B(R(a))} h(\rho(x;a))\rho_a(x;a))  
-  \iint_{B(R(a))^2}  \frac{\rho_a(x;a)  \rho(x;a)} {|x-y|} dxdy  \\
&=  \int_{B(R(a))} \left\{ h(\rho(x;a)) - \left(\frac1{|\cdot|}*\rho(\cdot;a)\right)(x)\right\} \rho_a(x;a)\ dx  
\end{align}
since $\rho(R(a);a)=0$ and $H(0)=0$.  
By \ref{NG}  in Section 1, the factor in curly brackets is a constant $\alpha<0$, so that 
\eqn
\frac{d}{da} \E(\rho(\cdot,a))  = \alpha M'(a).  \eeqn
We will prove by contradiction that $M'(a)\ne0$.  

Now suppose that $M'(a)=0$ for some $a$.  Then $\frac{d}{da} \E(\rho(\cdot,a)) =0$. 
Using the virial identity, we therefore have 
\eqn 
0  =  \int  [h(\rho)-3\rho h'(\rho)]\ \rho_a \ dx  =  \int[(1+\rho^{2/3})^{-1/2}  -1]\ \rho_a\ dx.  
\eeqn 
The function $k(s) =1- (1+s^{2/3})^{-1/2}$ is positive and increasing for $s>0$,  
so that the radial function $r\to g(r)=:k(\rho(r))$ is positive and decreasing as a function of $r=|x|$ 
and it vanishes at $r=R(a)$.  
Now we have both $\int \rho_a\ dx=0$ and $\int g\ \rho_a\ dx=0$.  
This is impossible, due to the facts that $\rho_a$ strictly changes sign from positive to negative, 
while $g$ is positive and decreasing.  This contradiction means that $M'(a)\ne0$.  

Thus we have shown that $M(a)$ is either strictly increasing or strictly decreasing. 
We claim that  $M(a)\le C a^{3/4}$ for sufficiently small $a$. 
To prove this claim, we let $v(x;a)$ be the unique solution of 
\eqn\label{eq: rescale u}
\Delta v + (2v_++av_+^2)^{3/2}=0,~v(0)=1, ~v'(0)=0   \eeqn
 for $a\ge0$ and $|x|\ge0$. 
For $a>0$, a simple rescaling, 
using the formula for $h^{-1}$ given in \eqref{def: h inv} and the definition of $u$,   
shows that  $v(x;a) = \frac1a u\left(a^{-1/4}x;a\right)$,   
Now by  \cite{makino1984existence} the solution $v(x;0)$ has a unique zero  $R_0$.  
We obviously have $v'(R_0;0)<0$. 
By the continuous dependence of solution of the ODE on the parameter $a$, 
for arbitrarily small $\epsilon>0$ we have $v(R_0;a)<\epsilon$ and  
$v'(R_0;a)<v'(R_0;0)+\epsilon <0$, provided that $a$ is sufficiently small.  
Furthermore, $|v''(x;a)|$ is uniformly bounded for $|x|<R_0+1$ and $a$ small.   
Thus $v'(x;a)<v'(R_0;0)+2\epsilon$ for $R_0<|x|<R_0+\delta$ and some constant $\delta$.   
If $v(R_0;a)<0$, the zero of $v(x;a)$ occurs before $|x|$ reaches $R_0$.   
Otherwise $v(x;a)$ must cross zero before $|x|$ reaches $R_0+\delta$.   
Thus we have the following estimate on the radius of the star, 
which is the zero of $u(x;a)$: $R(a)\le a^{-1/4}(R_0+\delta)$ for small $a$.   
Because $u(x;a)$ and $\rho(r;a)$ are radially decreasing, we have  $\rho(r;a) \le h^{-1}(a)$ and 
$M(a)\le Ch^{-1}(a) [R(a)]^3 \le Ca^{3/2} a^{-3/4}=Ca^{3/4}$ for sufficiently small $a$.  
This proves the claim. 
Now if we assume by contradiction that $M(\cdot)$ is decreasing, then let $0<\ep<a$.  It follows that $0\le M(a) \le M(\ep) \le C\ep^{3/4}$ for small $\ep$.  
Hence $M(\cdot)$ cannot be decreasing.  Therefore $M'>0$.  
\qed

\section{Existence of Rotating White Dwarf Solutions}

We first describe how EP  reduces to \eqref{NG}.  
 The compressible Euler-Poisson equations (EP) are 
 \begin{equation}\label{eq: full Euler-Poisson}
\begin{cases}
\rho _t + \nabla\cdot (\rho v)=0, \\
(\rho v)_t + \nabla\cdot(\rho v\otimes v) + \nabla p = \rho \nabla U, \\
U(x,t) = \int_{\real^3}\frac{\rho(x',t)}{|x-x'|}~dx'.
\end{cases}
\end{equation}
The first two equations hold where $\rho>0$, and the last equation defines $U$ on the entire $\real^3$.
The equation of state $p = p(\rho)$ given by \eqref{eq: eos} closes the system. 
To model a rotating star, one looks for a steady axisymmetric rotating solution 
to \eqref{eq: full Euler-Poisson}.  
That is,  we assume $\rho$ is symmetric about the $x_3$-axis  
and $v=\ka\,\omega(r)(-x_2,x_1,0) $, where $r=r(x)=\sqrt{x_1^1+x_2^2}$, 
as distinguished from the $r$ in Section 3,  
with a prescribed function $\omega(r)$. 
With such specifications, the first equation in \eqref{eq: full Euler-Poisson} 
concerning mass conservation 
is identically satisfied. The second equation in \eqref{eq: full Euler-Poisson} 
concerning momentum conservation simplifies to
\begin{equation}\label{eq: pre Euler-Poisson}
-\rho\, \ka^2\,r\omega^2(r) e_r + \nabla p 
= \rho\,\nabla \left(\frac{1}{|\cdot|}*\rho\right), \qquad e_r=\frac{1}{r(x)}(x_1,x_2,0). 
\end{equation}
The first term in \eqref{eq: pre Euler-Poisson} can be written as
$-\rho\nabla \left(\int_0^r \omega^2(s)s~ds\right).  $
Introducing the {specific enthalpy} $h$ as above, 
\eqref{eq: pre Euler-Poisson} becomes 
\eqn \label{eq: Euler-Poisson vector}
\nabla \left(\frac{1}{|\cdot|}*\rho +\ka^2\int_0^r \omega^2(s)s~ds - h(\rho) \right)=0.  \eeqn

With the key difficulty about the mass  function $M(a)$ having been resolved in Section \ref{M' positive},  
we will be able to prove Theorem \ref{thm: informal}.  
In order to formulate the result precisely, let us put the following conditions on the 
rotation profile $\omega(s)$:
\eqn\label{cond: omega 1}
s\omega^2(s)\in L^1(0,\infty),\quad \omega^2(s)\text{ is not compactly supported},
\eeqn
\eqn\label{cond: omega 2}
\lim_{r(x)\to\infty}r(x)(\sup_xj-j(x))=0, 
\eeqn
where 
\eqn
j(x) = \int_0^{r(x)}s\omega^2(s)~ds.
\eeqn
Let $\rho_0(x)$ be the unique non-rotating ($\kappa=0$) solution with mass 
 $M=\int\rho_0(x)~dx$.   We define the pair  
$\F(\rho,\kappa,\alpha) = (\F_1(\rho,\kappa, \alpha) , \F_2(\rho))$, where 
\eqn
\F_1(\rho,\kappa, \alpha)  =  \rho(\cdot)  -   h^{-1}\left(
\left[ \frac{1}{ |\cdot |} * \rho(\cdot) + \kappa^2j(\cdot) + \alpha \right]_+\right),  
\eeqn  and 
\eqn
\F_2(\rho)  =  \int_{\real^3} \rho(x)\ dx - M.  
\eeqn

As usual, a solution to $\F(\rho,\kappa,\alpha)=0$ 
with $\rho\in C_{loc}(\real^3)\cap L^1(\real^3)$ will give rise to a steady solution of the 
Euler-Poisson equations with rotation profile $\kappa\omega(s)$, and mass $M$.  
In particular, $\F(\rho_0,0,\alpha_0)=0$.  
We have the following main theorem, more precise than Theorem \ref{thm: informal}.
\begin{theorem}\label{thm: main}
For given $\omega(s)$ satisfying the above assumptions, and given non-rotating white 
dwarf solution $\rho_0$, there exists a connected set $\K$ in $C_c^1(\real^3)\times \real\times \real$ 
such that 
\begin{enumerate}
\item $\F(\rho,\kappa,\alpha)=0$ for all $(\rho,\kappa,\alpha)\in\K$.  
In other words, $\K$ is a set of rotating white dwarf solutions.
\item $(\rho_0,0,\alpha_0)\in \K$.
\item Either 
$$\sup\{\|\rho\|_\infty~|~(\rho,\kappa,\alpha)\in\K\}=\infty$$
or
$$\sup\{|x|~|~\rho(x)>0,~(\rho,\kappa,\alpha)\in\K\}=\infty.$$
\end{enumerate}
\end{theorem}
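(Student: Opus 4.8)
The plan is to construct $\K$ by global continuation theory based on Leray--Schauder degree, following the scheme indicated in the introduction (Section 4 as in \cite{strauss2019rapidly}), but organized into two nested continuation arguments so that $\kappa$ and $\alpha$ each play the role of a bifurcation/continuation parameter. I would first recast the equation $\F(\rho,\kappa,\alpha)=0$ as a fixed point problem $\rho = T(\rho,\kappa,\alpha)$, where $T(\rho,\kappa,\alpha) = h^{-1}\bigl(\bigl[\tfrac{1}{|\cdot|}*\rho + \kappa^2 j + \alpha\bigr]_+\bigr)$, together with the scalar constraint $\F_2(\rho)=0$ that pins the mass at $M$. The role of $\alpha$ is to serve as a Lagrange-type unknown enforcing the mass constraint; for each fixed $\kappa$ the pair $(\rho,\alpha)$ is sought jointly, and the operator $T(\cdot,\kappa,\alpha)$ should be shown to be compact on the relevant subspace of $C_c^1(\real^3)$ (or a Hölder space) by exploiting the smoothing of the Newtonian potential convolution and the regularity of $h^{-1}$ away from the free boundary. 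The monotonicity $M'(a)>0$ established in Section 3 is what guarantees that, at $\kappa=0$, the constraint $\F_2=0$ has a \emph{unique} solution $\rho_0$ with a well-defined $\alpha_0$, and that the linearization there is invertible modulo the constraint; this is the ``two crucial places'' role flagged in the introduction.

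Next I would build the local branch emanating from $(\rho_0,0,\alpha_0)$. The key is a global implicit function theorem of Leray--Schauder/Rabinowitz type: one verifies that $I-T$ (coupled with the mass constraint) is a compact perturbation of the identity with nonzero local degree at the non-rotating solution, using the invertibility of the linearized operator. Then the abstract global continuation theorem produces a connected component $\K$ containing $(\rho_0,0,\alpha_0)$ which is either unbounded in $C_c^1(\real^3)\times\real\times\real$, or returns to the boundary of the admissible region, or meets another solution of the degenerate set. To convert the abstract alternative into the dichotomy in item (3), I would derive \emph{a priori} bounds: I would show that as long as $\|\rho\|_\infty$ stays bounded and the support stays in a fixed ball, the parameters $\kappa$ and $\alpha$ remain in a compact set and $\rho$ stays in a compact subset of $C_c^1$. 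Here the hypotheses \eqref{cond: omega 1}--\eqref{cond: omega 2} on the rotation profile are essential: condition \eqref{cond: omega 2} controls the decay of $j$ at infinity so that the free boundary cannot escape to infinity without the support already being unbounded, and the mass constraint $\int\rho = M$ prevents the trivial collapse $\rho\equiv 0$.

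The delicate part, as the introduction warns, is the ``delicate limiting process'' needed because the solution space $C_c^1(\real^3)$ is awkward for degree theory: compactly supported functions with a moving free boundary do not sit inside a fixed bounded domain. My plan would be to regularize by first solving the problem on enlarging balls $B(0,n)$ with an artificially truncated or penalized version of $h^{-1}$, obtaining connected solution sets $\K_n$ by Leray--Schauder degree on the fixed domain $B(0,n)$, and then pass to the limit $n\to\infty$ using the a priori support bound from Lemma \ref{lem: support bound} (forcing $\alpha$ sufficiently negative keeps the support in a fixed ball so the truncation is inactive). The connectedness of $\K$ would then follow from a standard argument that a nested limit of connected sets, each meeting a fixed point $(\rho_0,0,\alpha_0)$ and each satisfying the global alternative, has a connected limiting component inheriting the alternative. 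The main obstacle I expect is precisely the loss of compactness in this limit: one must rule out ``escape of mass to infinity'' and degeneration of the free boundary, and it is exactly conditions \eqref{cond: omega 1}--\eqref{cond: omega 2} together with the fixed-mass normalization that must be leveraged to show that the only ways the branch $\K$ can fail to stay in a compact set are the two alternatives $\sup\|\rho\|_\infty=\infty$ or $\sup\{|x|:\rho(x)>0\}=\infty$ stated in item (3).
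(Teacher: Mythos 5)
Your overall scheme --- fixed-point reformulation with $\alpha$ as the unknown enforcing the mass constraint, compactness from the Newtonian potential, invertibility of the linearization from $M'(a)>0$, a Rabinowitz-type global continuation, and a nested exhaustion with $\alpha$ forced negative to control the support --- matches the paper's. But two of the load-bearing steps are missing. First, the Rabinowitz alternative includes the possibility that the branch returns to the starting solution (the ``loop''), and you never rule it out. The paper does so by observing that each $\K_N$ is even in $\kappa$, so a loop would have to contain a second non-rotating solution $(\rho_1,0,\alpha_1)$ with the same total mass but a different central density; this contradicts the strict monotonicity of $M(a)$. This is the second of the ``two crucial places'' where the monotonicity enters, and your proposal only identifies the first (the isomorphism of $\frac{\partial\F}{\partial(\rho,\alpha)}$ at the non-rotating solution).

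Second, and more seriously, the conversion of the abstract alternatives into the dichotomy of item (3) is the most delicate part of the proof, and you only assert that conditions \eqref{cond: omega 1}--\eqref{cond: omega 2} ``must be leveraged'' without saying how. The paper assumes both $\sup_{\K}\|\rho\|_\infty<\infty$ and a uniform support bound $R_*$ and derives a contradiction in two stages: (i) if $\kappa^2\sup_x j(x)+\alpha\to 0$ along $\K$, the decay hypothesis \eqref{cond: omega 2} forces the bracket $\frac{1}{|\cdot|}*\rho+\kappa^2 j+\alpha$ to become positive at some large $|x|$ in the plane $x_3=0$, contradicting the support bound; hence $\|\rho\|_s+|\kappa|+|\alpha|$ must blow up; (ii) since $\rho$ is then bounded in $\C_s$, one must have $\kappa_n\to\infty$ along a sequence (bounded $\kappa$ with $\alpha_n\to-\infty$ would force $\rho\equiv 0$ and violate the mass constraint), and comparing $\F_1=0$ at a point $y_0$ outside the support with $j(r(y_0))>j(R_*)$ --- which exists precisely because $\omega^2$ is not compactly supported, hypothesis \eqref{cond: omega 1} --- against a point inside the support yields $0\ge \kappa_n^2[j(r(y_0))-j(r(x_n))]-C$, which is impossible. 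Without this argument item (3) is not established. Finally, your truncation on balls $B(0,n)$ with a penalized $h^{-1}$ is a genuinely different and heavier device than the paper's: the paper works directly in the weighted space $\C_s$ on all of $\real^3$ and exhausts instead in parameter space via the sets $\O_N$ of \eqref{Oh}, with Lemma \ref{lem: support bound} supplying the support bound automatically; with a spatial truncation you would additionally have to show the cut-off creates no spurious solutions and that the degrees are consistent as $n$ grows.
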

This means that either the densities become unbounded or the supports of the stars 
become unbounded. 
The proof of Theorem \ref{thm: main} is basically parallel to the argument 
in \cite{strauss2019rapidly} now that we have proven $M'(a)\ne 0$ in Section \ref{M' positive}. 
For completeness, we provide a sketch of the complete argument below.   
We refer to \cite{strauss2019rapidly} for more details.

For fixed constants $s>3$, 
let us define the weighted space 
$$ 
\C_s = \left\{  f:\real^3\to \real\ \Big|\ f \text{ is continuous, axisymmetric, even in }x_3, 
\text{ and } \|f\|_s <\infty\right\},  $$ 
where 
$$
\|f\|_s =: \sup_{x\in\real^3}\langle x\rangle^s|f(x)| <\infty.$$ 
Also define for $N>0$,
\eqn  \label{Oh}
\O_N = \left\{ (\rho,\kappa,\alpha)\in \C_s\times\real^2\ \Big|\  \alpha +  \kappa^2 \sup_x j(x) < - \frac1N \right\}. 
\eeqn 

We begin by showing an elementary support estimate for the nonlinear part of $\F_1$ on $\O_N$.
\begin{lemma} \label{lem: support bound}
There exists a constant $C_0$ such that 
for all $ (\rho,\kappa,\alpha)\in \O_N$ the expression 
$\left[ \frac{1}{ |\cdot |} * \rho(\cdot) + \kappa^2j(\cdot) + \alpha \right]_+$ 
is supported in the ball $\{x\in\real^3:\ |x| \le C_0 N\|\rho\|_s \}$. 
\end{lemma}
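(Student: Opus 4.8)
The plan is to reduce the statement to a single decay estimate for the Newtonian potential $\frac{1}{|\cdot|}*\rho$, and then to feed in the defining inequality of $\O_N$. First I would use that $j(x)=\int_0^{r(x)}s\omega^2(s)\,ds\ge 0$ and $j(x)\le \sup_x j(x)$, so that for every $(\rho,\kappa,\alpha)\in\O_N$ and every $x$,
\[
\kappa^2 j(x)+\alpha \;\le\; \kappa^2\sup_x j(x)+\alpha \;<\; -\frac1N .
\]
Hence the argument of the positive part satisfies
\[
\frac{1}{|\cdot|}*\rho(x)+\kappa^2 j(x)+\alpha \;<\; \frac{1}{|\cdot|}*\rho(x)-\frac1N ,
\]
so $\bigl[\frac{1}{|\cdot|}*\rho(x)+\kappa^2 j(x)+\alpha\bigr]_+$ can be nonzero only at points where $\frac{1}{|\cdot|}*\rho(x)>\frac1N$. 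Everything therefore comes down to showing that the potential is small wherever $|x|$ is large compared with $N\|\rho\|_s$.

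Second, I would prove the pointwise bound $\frac{1}{|\cdot|}*\rho(x)\le \dfrac{C\|\rho\|_s}{1+|x|}$ with $C=C(s)$. Since the Newtonian kernel is positive and $\rho\in\C_s$ gives the majorization $|\rho(y)|\le \|\rho\|_s\,\langle y\rangle^{-s}$, we have
\[
\frac{1}{|\cdot|}*\rho(x)\;\le\;\|\rho\|_s\int_{\real^3}\frac{\langle y\rangle^{-s}}{|x-y|}\,dy .
\]
The majorant $\langle y\rangle^{-s}$ is radial and decreasing, so Newton's theorem applies to it and, with $R=|x|$, bounds the integral by $\frac1R\int_{|y|<R}\langle y\rangle^{-s}\,dy+\int_{|y|>R}\langle y\rangle^{-s}|y|^{-1}\,dy$. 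Here $s>3$ is exactly what is needed: the first integral is controlled by the finite total ``mass'' $\int_{\real^3}\langle y\rangle^{-s}\,dy<\infty$, giving the leading $O(1/R)$ term, while the tail contributes $O(R^{2-s})=O(1/R)$. Combined with the trivial bound $\frac{1}{|\cdot|}*\rho(0)\le \|\rho\|_s\int \langle y\rangle^{-s}|y|^{-1}\,dy\le C\|\rho\|_s$ near the origin, this yields $\frac{1}{|\cdot|}*\rho(x)\le C\|\rho\|_s/(1+|x|)$ uniformly.

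Finally I would combine the two steps: at any point where the positive part is nonzero we must have $\frac{C\|\rho\|_s}{1+|x|}\ge \frac{1}{|\cdot|}*\rho(x)>\frac1N$, hence $|x|<CN\|\rho\|_s$. Taking $C_0=C$ gives the asserted support in $\{|x|\le C_0 N\|\rho\|_s\}$ (the case where the set is empty being trivial). The only genuinely substantive step is the potential estimate, and the subtlety there is securing the full $1/|x|$ decay rather than merely boundedness; this is where the hypothesis $s>3$, which makes the majorant integrable, is essential, and where the comparison with the radial majorant via Newton's theorem does the real work.
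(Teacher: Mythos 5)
Your proposal is correct and follows essentially the same route as the paper: the paper likewise reduces the claim to the decay estimate $\bigl|\tfrac{1}{|\cdot|}*\rho(x)\bigr|\le C_0\|\rho\|_s\langle x\rangle^{-1}$ (stated there without proof, "because $s>3$") and then uses the defining inequality of $\O_N$ to force the bracket to be negative for $|x|>C_0N\|\rho\|_s$. Your only addition is to supply the proof of the potential bound via Newton's theorem and the near/far splitting, which is a correct justification of the step the paper takes for granted.
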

\begin{proof} 
First we note that $\left| \frac{1}{ |\cdot |} * \rho(\cdot) (x)\right|\le C_0 \|\rho\|_s \frac1{\langle x\rangle}$  
because $s>3$.    Hence for $|x|>C_0N\|\rho\|_s$,
$$
\left[ \frac{1}{ |\cdot |} * \rho(\cdot) (x)+ \kappa^2j(x) + \alpha \right] 
\le C_0 \|\rho\|_s \frac1{\langle x\rangle} - \frac1N < 0  $$
since $ (\rho,\kappa,\alpha)\in \O_N$.  Therefore its positive part vanishes for such $x$.  
\end{proof}

We see from this lemma that $\F_1$ differs from $\rho$ only by a perturbation on a compact set.   
Using this observation and the smoothing effect of $\Delta^{-1}$, it is easy to obtain 
\begin{lemma}\label{lem: compactness}
$\F$ maps $\O_N$ to $\C_s\times \real$. It is $C^1$ Frech\'et differentiable, 
where $\frac{\partial\F}{\partial(\rho,\alpha)}(\rho,\kappa,\alpha)$ is Fredholm of index zero.   
The nonlinear part of $\F_1$ (i.e. $\F_1-\rho$) is compact from $\O_N$ to $\C_s$.
\end{lemma}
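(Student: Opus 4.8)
The plan is to dispose of the four assertions in turn, in each case using Lemma~\ref{lem: support bound} to confine every computation involving the nonlinearity to a fixed ball, on which the weighted norm $\|\cdot\|_s$ and the ordinary sup norm are equivalent (for $f$ supported in $\bar B_R$ one has $\|f\|_\infty\le\|f\|_s\le\langle R\rangle^s\|f\|_\infty$). Throughout I write $w=\frac1{|\cdot|}*\rho+\kappa^2 j+\alpha$ and $G(t)=h^{-1}(t_+)$, so that the nonlinear part of $\F_1$ is $-G(w)$. From \eqref{def: h inv}, $G(t)=(2t+t^2)^{3/2}$ for $t\ge 0$ and $G(t)=0$ for $t\le 0$; since $(h^{-1})'(t)=3(1+t)\sqrt{2t+t^2}\to 0$ as $t\to 0^+$, the function $G$ is of class $C^1(\real)$ with $G(0)=G'(0)=0$ and with $G'(t)=(h^{-1})'(t)\,\mathbf 1_{\{t>0\}}$ continuous. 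This regularity of $G$ is exactly what will make the associated superposition operator differentiable.

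First I would record the mapping property. The map $\rho\mapsto\frac1{|\cdot|}*\rho$ is bounded from $\C_s$ into the continuous functions that decay like $\langle x\rangle^{-1}$ (the estimate already used in Lemma~\ref{lem: support bound}, valid because $s>3$ forces $\rho\in L^1\cap L^\infty$); hence $w$ is continuous, and axisymmetry and evenness in $x_3$ pass from $\rho$ and $j$ to $w$. By Lemma~\ref{lem: support bound}, $G(w)$ is continuous and supported in $\{|x|\le C_0N\|\rho\|_s\}$, so $G(w)\in\C_s$, while $\rho\in L^1$ gives $\F_2(\rho)\in\real$; thus $\F:\O_N\to\C_s\times\real$. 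For the compactness of $-G(w)$, take a bounded sequence in $\O_N$, so that $\|\rho_n\|_s$ is bounded and, by Lemma~\ref{lem: support bound}, all the $G(w_n)$ are supported in one fixed ball $B$. On $B$ the Newtonian potentials $\frac1{|\cdot|}*\rho_n$ are uniformly bounded in $C^1$, hence equicontinuous, so Arzel\`a--Ascoli yields a subsequence along which $w_n$ converges uniformly on $B$ (using the boundedness of $\kappa_n,\alpha_n$ and the continuity of $j$); continuity of $G$ gives uniform convergence of $G(w_n)$, and since everything is supported in $B$ this is convergence in $\|\cdot\|_s$. Hence $-G(w)$ is compact.

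Next I would prove $C^1$ differentiability, which I expect to be the main obstacle. Fix a point of $\O_N$ and a bounded neighborhood of it; by Lemma~\ref{lem: support bound} there is a fixed ball $B$ whose interior contains the supports of $G(w)$ for all $(\rho,\kappa,\alpha)$ in this neighborhood. On such functions $\|\cdot\|_s$ and $\|\cdot\|_\infty$ are equivalent, so it suffices to work into $C(\bar B)$. The map $(\rho,\kappa,\alpha)\mapsto w|_{\bar B}$ is polynomial (affine in $(\rho,\alpha)$, quadratic in $\kappa$) into $C(\bar B)$, because $\rho\mapsto\frac1{|\cdot|}*\rho$ is bounded linear into $C(\bar B)$, $\kappa\mapsto\kappa^2 j|_{\bar B}$ is smooth, and $\alpha\mapsto\alpha$ is constant. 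On $C(\bar B)$ the superposition operator $u\mapsto G\circ u$ is $C^1$ with derivative $v\mapsto(G'\circ u)\,v$ — the standard fact for $G\in C^1(\real)$, since on bounded subsets of $C(\bar B)$ the functions take values in a fixed compact interval on which $G'$ is uniformly continuous. Composing, $-G(w)$ is $C^1$ into $C(\bar B)$, hence into $\C_s$, with derivative $-G'(w)\bigl[\tfrac1{|\cdot|}*\delta\rho+2\kappa\,\delta\kappa\,j+\delta\alpha\bigr]$; adding the bounded linear term $\delta\rho$ and the affine map $\F_2$ shows $\F$ is $C^1$. The delicate point here is precisely that the corner of the positive part and the vanishing of $(h^{-1})'$ at $0$ conspire to give $G\in C^1(\real)$; without $G'(0)=0$ the superposition operator would fail to be differentiable, and it is the compact-support reduction that permits the standard $C^1$ Nemytskii argument to be run on the fixed ball $\bar B$ rather than on all of $\real^3$.

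Finally I would verify the Fredholm property. Differentiating in $(\rho,\alpha)$ at fixed $\kappa$,
\[
\frac{\partial\F}{\partial(\rho,\alpha)}(\delta\rho,\delta\alpha)=\Bigl(\delta\rho-G'(w)\bigl[\tfrac1{|\cdot|}*\delta\rho+\delta\alpha\bigr],\ \int_{\real^3}\delta\rho\,dx\Bigr),
\]
and I would split this as $T+K$ with $T(\delta\rho,\delta\alpha)=(\delta\rho,0)$. The operator $T$ has one-dimensional kernel $\{0\}\times\real$ and closed range $\C_s\times\{0\}$ of codimension one, so it is Fredholm of index zero. The remainder $K$ is compact: the term $G'(w)\tfrac1{|\cdot|}*\delta\rho$ is compact (either from the general fact that the derivative of a compact $C^1$ map is compact, or directly by the Arzel\`a--Ascoli argument above, noting that $G'(w)$ is a fixed continuous compactly supported factor), the $\delta\alpha$-term has rank one, and $\delta\rho\mapsto\int\delta\rho$ maps into $\real$ and so has finite rank. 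Since a compact perturbation of a Fredholm operator of index zero is again Fredholm of index zero, the conclusion follows.
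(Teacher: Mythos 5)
Your proposal is correct and follows essentially the same route as the paper: reduce everything to a fixed ball via Lemma~\ref{lem: support bound}, use the smoothing/compactness of the Newtonian potential there, and transfer back to $\C_s$ via the norm equivalence $\|u\|_{s}\le\langle R\rangle^s\|u\|_{C^0(\overline{B_R})}$ for $u$ supported in $B_R$. The paper only writes out the compactness step and defers the rest to its earlier reference, so your explicit verification of the $C^1$ Nemytskii property --- hinging on the fact that $(h^{-1})'(t)\to 0$ as $t\to 0^+$, which makes $t\mapsto h^{-1}(t_+)$ genuinely $C^1$ across the free boundary --- and of the index-zero decomposition is a faithful filling-in of the omitted details rather than a different argument.
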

\begin{proof} 
By Lemma \ref{lem: support bound},  if $(\rho,\kappa,\alpha)$ is bounded, the support of 
$\left[ \frac{1}{ |\cdot |} * \rho(\cdot) + \kappa^2j(\cdot) + \alpha \right]_+$ is contained in some 
ball $B_R$. The map is obviously compact from $\O_N$ to $C^0(\overline{B_R})$.   
Using again the trivial bound $\|u\|_{\C^s}\le \langle R \rangle^s \|u\|_{C^0(\overline{B_R})}$ 
for $u\in \C_s$ supported in $B_R$, we obtain the  compactness of this mapping into $\C_s$.
\end{proof}

\begin{lemma} \label{lem: iso}
$\frac{\partial \F}{\partial(\rho,\alpha)} (\rho_0,0,\alpha_0): \C_s\times\real\to\C_s\times \real$ is an isomorphism.  
\end{lemma}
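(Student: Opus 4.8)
The plan is to compute the Fréchet derivative at the base point, reduce the isomorphism claim to injectivity by the index-zero Fredholm property, and then kill the kernel by splitting it into a radial part (controlled by the mass monotonicity of Section \ref{M' positive}) and a non-radial part (controlled by a ground-state argument).

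First I would compute the derivative. Write $u_0=h(\rho_0)$, let $\Omega=\{\rho_0>0\}$ (a ball $B_R$ by radiality), and set $m=(h^{-1})'(u_0)$. On $\Omega$ the argument of the positive part equals $u_0>0$, so for a perturbation $(\sigma,\beta)$,
$$\frac{\partial\F}{\partial(\rho,\alpha)}(\rho_0,0,\alpha_0)(\sigma,\beta)=\Big(\sigma-m\,\chi_\Omega\big(\tfrac1{|\cdot|}*\sigma+\beta\big),\ \int_{\real^3}\sigma\,dx\Big).$$
The kink of $[\cdot]_+$ causes no trouble because $(h^{-1})'(0)=0$, so $m$ is continuous and vanishes on $\partial\Omega$; this matches the $C^1$ statement of Lemma \ref{lem: compactness}, which also tells us the operator is Fredholm of index zero. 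Hence it is an isomorphism as soon as it is injective, and everything reduces to showing the kernel is trivial. So suppose $(\sigma,\beta)$ is in the kernel. Putting $\Psi=\tfrac1{|\cdot|}*\sigma+\beta$, the first equation is $\sigma=m\chi_\Omega\Psi$, so $\Delta\Psi+4\pi m\chi_\Omega\Psi=0$ on $\real^3$ with $\Psi\to\beta$ at infinity, while the second equation is $\int\sigma\,dx=0$. Since $\sigma\in\C_s$ is axisymmetric and even in $x_3$, I would expand it in Legendre modes $\sigma=\sum_{\ell\ \mathrm{even}}\sigma_\ell(r)P_\ell(\cos\theta)$, on which the Newtonian potential and the constant $\beta$ act diagonally. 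Only even $\ell$ occur, so in particular the translational modes $\partial_{x_i}u_0$ (odd in $x_3$, or non-axisymmetric) are excluded from the outset.

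For the radial part $(\sigma_0,\beta)$ I would invoke Lemma 2.1 directly. Setting $w=r\big(\tfrac1{|\cdot|}*\sigma_0+\beta\big)$, the radial equation becomes $w''+4\pi m\,w=0$ on $(0,R)$ with $w(0)=0$, matched $C^1$ to the exterior affine profile; since this ODE is regular at the origin, its solutions with $w(0)=0$ form a one-dimensional space. Moreover, in the proof of Lemma 2.1 the function $z=w_a$ solves exactly $z''+g_ww=0$ with $g_w=4\pi m$ on $\Omega$, and by \eqref{phi int} it satisfies $z(0)=0$, $z'(0)=1$; hence the radial kernel is spanned by the family direction $(\rho_a,\alpha'(a))$ with $\rho_a=m\chi_\Omega\,z/r$. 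Thus $(\sigma_0,\beta)=c(\rho_a,\alpha'(a))$, and the constraint $0=\int\sigma\,dx=\int\sigma_0\,dx=c\,M'(a_0)$ forces $c=0$ because $M'(a_0)>0$ by the monotonicity proven in Section \ref{M' positive}. This is precisely the crucial place where that monotonicity enters.

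The hard part will be ruling out the non-radial modes $\ell\ge2$, which solve the homogeneous problem $\Delta\Psi_\ell+4\pi m\chi_\Omega\Psi_\ell=0$ with $\Psi_\ell\to0$, i.e. they are zero-energy bound states of $H=-\Delta-4\pi m\chi_\Omega$ of angular momentum $\ell$. I would treat them via the radial operators $H_\ell=-\partial_r^2-\tfrac2r\partial_r+\tfrac{\ell(\ell+1)}{r^2}-4\pi m\chi_\Omega$. The key observation is that $u_0'(r)$ is a zero mode of $H_1$ (it is the radial profile of the translations $\partial_{x_i}u_0$), and that $u_0'(r)<0$ for all $r>0$ because $u_0$ is radially decreasing both inside the star (by Section \ref{M' positive}) and outside it (where it is harmonic). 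A sign-definite $L^2$ zero mode is a ground state, so $\inf\mathrm{spec}\,H_1=0$ and $H_1\ge0$; adding the strictly positive centrifugal term $\tfrac{\ell(\ell+1)-2}{r^2}$ gives $\langle H_\ell\phi,\phi\rangle>0$ for all $\phi\neq0$ when $\ell\ge2$, so $H_\ell$ has no zero mode and $\sigma_\ell=0$. Combining the two parts, the kernel is trivial, and together with the index-zero Fredholm property this shows $\frac{\partial\F}{\partial(\rho,\alpha)}(\rho_0,0,\alpha_0)$ is an isomorphism.
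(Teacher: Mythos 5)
Your proposal is correct, and for the heart of the lemma it coincides with the paper's proof: both reduce to injectivity via the index-zero Fredholm property from Lemma \ref{lem: compactness}, both identify the radial part of a kernel element with the one-parameter family direction $u_a$ (the paper compares $\Delta w + 4\pi (h^{-1})'(u_0)w=0$ directly with $\Delta u_a + 4\pi (h^{-1})'(u_0)u_a=0$; you pass through $z=w_a=ru_a$ from Lemma 2.1, which is the same object), and both then kill the constant by observing that the mass constraint forces $c\,M'(a_0)=0$ while $M'(a_0)>0$ by Section \ref{M' positive}. The one genuine difference is the non-radial sector: the paper simply cites Lemma 4.3 of \cite{strauss2019rapidly} for the statement that $w$ must be radial, whereas you supply a self-contained argument — decompose into even Legendre modes, note that $u_0'(r)$ is a nodeless $L^2$ zero mode of the $\ell=1$ radial operator $H_1$ (this uses $(h^{-1})'(0)=0$ so the coefficient is continuous across $\partial\Omega$, and $u_0'<0$ also outside the star where $u_0=\alpha_0+M/|x|$), conclude $H_1\ge 0$, and add the strictly positive centrifugal increment $\bigl(\ell(\ell+1)-2\bigr)/r^2$ to rule out zero modes for $\ell\ge2$. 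That is the standard mechanism (and, as far as one can tell, essentially the content of the cited lemma), so your write-up is a legitimate, more self-contained version of the same proof rather than a new route. Two small points to tighten: since the potential $-4\pi m\chi_\Omega$ is compactly supported, $0$ is the bottom of the essential spectrum of $H_1$, so ``nodeless zero mode $\Rightarrow H_1\ge0$'' should be run as the orthogonality argument against a hypothetical negative ground state (both would be sign-definite, hence not orthogonal); and in your radial step the displayed relation $z''+g_w w=0$ should read $z''+g_w z=0$.
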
 
\begin{proof} 
This lemma is the first place where the crucial condition $M'(a)\ne 0$ proven in Section \ref{M' positive} will be used. 
Let $(\delta\rho,\delta\kappa)$ belong to the nullspace of  $\frac{\partial \F}{\partial(\rho,\alpha)} (\rho_0,0,\alpha_0)$. 
Let $w = \frac1{|\cdot|} *\delta\rho + \alpha_0$.  
As shown in Lemma 4.3 of \cite{strauss2019rapidly}, $w$ is radial.  
Indeed, that argument  shows that $w$ must  be a radial solution of the 
boundary value problem
\eqn\label{eq: trivial kernel ode}
\Delta w + 4\pi \left[\left(h^{-1}\right)'(u_0)\right] w =0,\quad w'(0)=w'(R_0)=0
\eeqn
in the ball $B_{R_0}$, where $B_{R_0}$ is the support of $\rho_0$, and $u_0=h(\rho_0)$.  
Being an ODE, \eqref{eq: trivial kernel ode} can have an at most a one-dimensional solution space.  
On the other hand, we recall the definition for any $a>0$ that  $u(r;a)$ solves 
\eqn
\Delta u + 4\pi h^{-1} (u) =0,\quad u'(0)=0,~u(0;a)=a. 
\eeqn
Denoting  $u_a=\partial_au(r;u_0(0))$, we obviously have 
\eqn\label{eq: trivial kernel ode1}
\Delta u_a+ 4\pi \left[\left(h^{-1}\right)'(u_0)\right] u_a =0,\quad u_a'(0)=0.
\eeqn
Comparing \eqref{eq: trivial kernel ode} with \eqref{eq: trivial kernel ode1}, we see that $w=Cu_a$ for some constant $C$. Integrating \eqref{eq: trivial kernel ode}, we also have
\eqn
\int_{B_{R_0}}\left[\left(h^{-1}\right)'(u_0)\right] Cu_a~dx=0.
\eeqn
On the other hand, taking account of $\rho = h^{-1}(u)$ and \eqref{mass},  we see that  
\eqn
\int_{B_{R_0}}\left[\left(h^{-1}\right)'(u_0)\right] u_a~dx=\frac{d}{da}\bigg|_{a=u_0(0)}\int_{u(x;a)>0} h^{-1}(u(x;a))~dx=M'(u_0(0))\ne 0.
\eeqn
There is no boundary term because $h^{-1}(0)=0$. 
The last two equations imply that $C=0$, so that $w=0$.  
This implies that the kernel of $\frac{\partial \F}{\partial(\rho,\alpha)} (\rho_0,0,\alpha_0)$ is trivial, 
which is the key ingredient of the operator  being an isomorphism. 
\end{proof}

\begin{proof}  [Proof of  Theorem \ref{thm: main}]
With the suitable compactness properties given by Lemma \ref{lem: compactness} and local solvability given by Lemma \ref{lem: iso}, one is in a position to apply a global implicit function theorem of Rabinowitz (see Theorem 3.2 in \cite{rabinowitz1971some}, Theorem II.6.1 of \cite{kielhofer2006bifurcation},   
or \cite{alexander1976implicit}). The result is a connected set $\K_N\subset \O_N$ of solutions to $\F=0$ for which at least one of the following three properties holds:
\begin{enumerate}
\item $\K_N\setminus \{(\rho_0,0,\alpha_0)\}$ is connected.
\item $\K_N$ is unbounded, i.e. $$\sup_{\K_N} \ (\|\rho\|_s + |\kappa| + |\alpha|) =\infty.$$
\item $\K_N$ approaches the boundary of $\O_N$, i.e. 
$$\inf_{\K_N} \ \left |\kappa^2 \sup_x j(x) + \alpha + \frac1N \right | = 0.$$
\end{enumerate}

The first alternative (the 'loop') can be eliminated by observing that, 
since $\K_N$ is even in $\kappa$, 
if $\K_N\setminus \{(\rho_0,0,\alpha_0)\}$ were connected, it must contain a different non-rotating solution $(\rho_1,0,\alpha_1)\ne (\rho_0,0,\alpha_0)$.   
As in Lemma 5.1 of \cite{strauss2019rapidly},  it must be 
 a radial non-rotating white-dwarf solution with a different center density $\rho_1(0)\ne\rho_0(0)$ 
 but with the same total mass $\int_{\real^3}\rho_1(x)~dx=\int_{\real^3}\rho_0(x)~dx$. 
 This contradicts the strict monotonicity of $M(a)$ established in Section \ref{M' positive}.

The sets $\K_N$ are nested, so  their union $\K = \cup_{N=1}^\infty \K_N$ 
is also connected.  Therefore one of the following statements is true:
\begin{enumerate}[(a)]
\item $\sup_{\K} \ (\|\rho\|_s + |\kappa| + |\alpha|) = \infty$.
\item $\inf_\K \ |\kappa^2 \sup_x j(x) + \alpha| = 0$.  
\end{enumerate}
We suppose that both $\sup_\K \sup_{x\in\real^3} \rho(x)<\infty$ 
and $R_* =: \sup_\K  \sup_{ \rho(x)>0} \ |x| \  <\infty$, and will derive a contradiction.   

We will first prove that (a) is true.  
On the contrary, suppose that (a) is false.  Then (b) must be true and 
$\sup_{\K} \ (\|\rho\|_s + |\kappa| + |\alpha|) < \infty$. 
Since $|x-y|\le |x|+R_*$ for all $y$ in the support of $\rho$,   
we have $$\left(\frac1{|\cdot|}*\rho\right) (x) = \int \frac1{|x-y|} \rho(y) dy  \ge  \frac M{|x|+R_*}.$$ 
We may now write
\eqn\label{est: lower bound 1}
\frac1{|\cdot|}*\rho(x)  + \kappa^2j(x) + \alpha \ge \frac{M}{|x|+R_*}  
-\kappa^2 (\sup j - j(x))+(\kappa^2 \sup j + \alpha).
\eeqn
Let $\kappa_0=\sup_{\K}|\kappa|$.   
Considering a point $x$ in the plane $\{x_3=0\}$, we have  $|x|=r(x)$.   
By \eqref{cond: omega 2}, $\sup j-j(x) = o\left(\frac1{|x|}\right)$ as $|x|\to\infty$.
Thus by \eqref{est: lower bound 1}, 
\eqn\label{est: lower bound 2}
\frac1{|\cdot|}*\rho(x)  + \kappa^2j(x) + \alpha \ge\frac{M}{|x|+R_*}   
 - o\left(\frac{\kappa_0^2}{|x|}\right)+(\kappa^2 \sup_xj(x) + \alpha).
\eeqn
Choosing $|x|>R_*$ sufficiently large, we can make the sum of the first two terms on the right side 
of \eqref{est: lower bound 2} positive.   
Then because of (b), there exists a solution $(\rho,\kappa,\alpha)\in \K$ such that 
the right side of \eqref{est: lower bound 2} is positive.  
Hence, due to $\F_1(\rho, \kappa, \alpha) = 0$, we have $\rho(x)>0$.  
This contradicts the assumption that the support of $\rho$ is bounded by $R_*$.  
Thus (a) must be true.  

Since we have assumed that $\rho$ is pointwise bounded and its support is also bounded all along $\K$, 
it follows that $\rho$ is also bounded in the weighted space $\C_s$.  
Because of (a), it must be the case that $|\kappa| + |\alpha|$ is unbounded.  
From the definition of $\O_N$, we know that $\alpha<0$.  
In case $\kappa$ were bounded, it would have to be the case that $\alpha \to -\infty$ along a sequence.  
Then the equation $\F_1=0$ would imply that $\rho\equiv0$, which contradicts the mass constraint.  

So it follows that $\kappa_n \to \infty$  for some sequence 
$(\rho_n,\kappa_n,\alpha_n)\in \K$ with $\alpha_n<0$.   
For each $n$, let us choose any point $x_n$ such that $\rho_n(x_n)>0$.   
By \eqref{cond: omega 1}, we may also choose a point $y_0$ such that $r(y_0)>R_*$ and $j(y_0)>j(R_*)$.  
Since  $\rho_n(y_0)=0$ and $\rho_n(x_n)>0$, we have 
$$
0 \ge \left[\frac{1}{ |\cdot |} * \rho_n(\cdot) + \kappa_n^2j(\cdot) + \alpha_n\right]  (y_0) 
\ge \left[ \frac{1}{ |\cdot |} * \rho_n(\cdot) + \kappa_n^2j(\cdot) + \alpha_n\right] \bigg|_{x_n}^{y_0}.   $$   
On the right side, the $\alpha_n$ cancels.  
Due to our assumption that 
the values of $\rho_n$ and the supports of $\rho_n$ are uniformly bounded, 
we deduce that 
$$ 
0 \ge \kappa_n^2 [j(r(y_0))-j(r(x_n))] - C,$$
where $C$ is a fixed constant .  
Thus $j(r(x_n)) \to j(r(y_0))$ since $\kappa_n\to\infty$. 
But $r(x_n)\le R_*<r(y_0)$ and $j$ is an increasing function of $r$, so that 
$j(r(x_n)) \le j(R_*) < j(r(y_0)).$  
This is the desired contradiction.  

Finally, we remark on why $\K$ is also connected in $C_c^1(\real^3)\times \real^2$.   
In fact, we know that for each $N$ the set $\K_N$ is connected in $\C_s\times \real^2$. 
We also know from Lemma \ref{lem: support bound} that all the solutions in $\K_N\subset \O_N$ 
have a uniform bound on their supports,  This bound may depend  on $N$. 
The regularizing effect of $\Delta^{-1}$ then implies that $\K_N$ is connected in $C^1(F_N)\times\real^2$ 
for a suitable compact set $F_N\subset \real^3$.   
Thus $\K_N$ is connected in $C^1_c(\real^3)\times\real^2$ under the usual direct limit topology. 
Because $\K$ is a nested union of $\K_N$, it too is connected in $C^1_c(\real^3)\times\real^2$.

\end{proof}


\section{Pure $4/3$ Power under Uniform Rotation}
In this section, we briefly study the Euler-Poisson equation under the pure power equation of state $p=\rho^{4/3}$ and constant angular velocity profile. 
Analogously to the white dwarf case, we define $\F = \F(\rho,\kappa,\alpha) = (\F_1,\F_2)$ by 
\begin{align}
\F_1(\rho,\kappa,\alpha) &= \rho(x) -\left[\frac{1}{|\cdot|}*\rho(x)-\frac{1}{|\cdot|}*\rho(0)+\kappa r^2(x)+\alpha\right]_+^3,\label{eq: wd def F1}\\
\F_2(\rho,\kappa,\alpha) &= \int_{B_1}\rho(x)~dx-M,
\end{align}
and solve for $\F(\rho,\kappa,\alpha)=(0,0)$. The cubic function in \eqref{eq: wd def F1} corresponds to the pure $\frac43$ power in the equation of state. As before, the radial non-rotating solution $\rho_0 = u_0^3$ satisfies the equivalent equation 
\eqn\label{eq: 4/3 diff}
\Delta u_0 + 4\pi u_0^3 =0
\eeqn
on its support, which we may take to be the unit ball $B_1$ without loss of generality.   
Let $\alpha_0 = u_0(0)$, and $M=\int_{B_1}\rho_0(x)~dx$.   
We readily check that $\F(\rho_0,0,\alpha_0)=(0,0)$. By the scaling symmetry of \eqref{eq: 4/3 diff}, 
we easily see that for any $\alpha>0$ and 
$$\rho^\alpha(x) = \left(\frac\alpha{\alpha_0}\right)^3\rho_0\left(\frac\alpha{\alpha_0} x\right),  $$ 
we have $\F(\rho^\alpha, 0,\alpha) =(0,M)$.
This $\rho^\alpha$ has the same mass $M$ for all $\alpha$.   
 
Let $X = \C_{sym}(\overline{B_2})$ be  defined to have the same symmetry properties 
as $\C_s$ but  only defined on $B_2$.   
We will show that the linear operator 
$\frac{\partial \F}{\partial (\rho,\kappa)}(\rho_0,0,\alpha_0)):X\times \real\to X\times \real$ is bijective.   
Once this is proven,  the implicit function theorem implies that $(\rho,\kappa)$ is locally 
 uniquely determined locally by $\alpha$.  
Therefore the trivial solutions $(\rho^\alpha,0,\alpha)$ defined above are the unique local solutions 
and they are non-rotating.   
We thus obtain the following curious conclusion. 

\begin{prop} Assuming the equation of state $p=\rho^{4/3}$ and the 
uniform rotation profile $\omega\equiv\kappa$,   
there are no solutions close to $\rho_0$ with the same total mass as $\rho_0$ that are  slowly rotating.
\end{prop}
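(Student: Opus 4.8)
The plan is to reduce the whole proposition, via the implicit function theorem, to the bijectivity of $L:=\frac{\partial\F}{\partial(\rho,\kappa)}(\rho_0,0,\alpha_0):X\times\real\to X\times\real$ announced just above the statement. Granting that bijectivity, the implicit function theorem produces a unique $C^1$ branch $\alpha\mapsto(\rho(\alpha),\kappa(\alpha))$ of solutions of $\F=0$ in a full neighborhood of $(\rho_0,0,\alpha_0)$. The explicit scaling family $(\rho^\alpha,0,\alpha)$ is itself such a branch and is non-rotating, so by local uniqueness it must be the only one. Hence every solution near $\rho_0$ with mass $M$ has $\kappa=0$, which is exactly the assertion. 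Thus the entire task is to prove that $L$ is bijective, and everything below is devoted to that.

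First I would observe, exactly as in the compactness argument of Section 4, that the nonlinear part of $\F_1$ is compact and the mass term is a bounded linear functional, so that $L$ is Fredholm of index zero; bijectivity is then equivalent to injectivity. Let $(\delta\rho,\delta\kappa)\in\ker L$ and set $w=\frac1{|\cdot|}*\delta\rho-\frac1{|\cdot|}*\delta\rho(0)+\delta\kappa\,r^2$. Differentiating $\F_1$ at the base point gives $\delta\rho=3u_0^2\,w$ inside $B_1$ (and $\delta\rho=0$ outside), where $u_0=\rho_0^{1/3}$ solves $\Delta u_0+4\pi u_0^3=0$ in $B_1$ with $u_0|_{\partial B_1}=0$, while $\F_2=0$ gives the constraint $\int_{B_1}u_0^2w\,dx=0$. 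Using $\Delta(\frac1{|\cdot|}*\delta\rho)=-4\pi\delta\rho$ and $\Delta r^2=4$, this is equivalent to the linearized Lane--Emden problem
\[\Delta w+12\pi u_0^2 w=4\delta\kappa\quad\text{in }B_1,\qquad w(0)=0,\]
coupled to the matching condition that $\frac1{|\cdot|}*\delta\rho$ be the decaying harmonic extension outside $B_1$.

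The key tool is the scaling generator $\phi=u_0+x\cdot\nabla u_0$, which solves the homogeneous equation $\Delta\phi+12\pi u_0^2\phi=0$, is radial, and satisfies $\phi(1)=u_0'(1)=-M$ together with $\phi'(1)=0$ (the latter from $u_0(1)=0$ in the radial ODE). I would decompose $w$ and $\delta\rho$ into spherical harmonics; by axisymmetry and evenness in $x_3$ only the modes $\ell=0,2,4,\dots$ occur, and $r^2=\frac23|x|^2-\frac23|x|^2P_2(\cos\theta)$ excites only $\ell=0,2$. In the radial ($\ell=0$) sector the mass constraint forces the monopole to vanish, so the $\ell=0$ part $\psi_0$ of $\frac1{|\cdot|}*\delta\rho$ vanishes outside $B_1$ and hence $w_0'(1)=\frac43\delta\kappa$; testing the $\ell=0$ equation against $\phi$ by Green's identity, the potential terms cancel and, using $\int_{B_1}\phi\,dx=-2\int_{B_1}u_0\,dx$ and the boundary data $\phi(1)=-M$, $\phi'(1)=0$, one is left with $\delta\kappa\,\Theta=0$, where $\Theta=8\int_{B_1}u_0\,dx-\frac{16\pi}{3}M$. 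Once $\Theta\neq0$ is known this yields $\delta\kappa=0$; then every sector is homogeneous, in the radial sector $w_0=c\phi$ with $w_0(0)=c\,u_0(0)=0$ forces $c=0$, and in the sectors $\ell\ge2$ the homogeneous transmission problem has only the trivial solution by non-degeneracy of the linearized operator. Hence $\delta\rho=0$, $\delta\kappa=0$, and $L$ is injective.

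The two points carrying the real weight are the non-degeneracy in the modes $\ell\ge2$ and the scalar transversality $\Theta\neq0$. For the first, the natural argument is spectral: $u_0>0$ is the zero-eigenvalue ground state of $-\Delta-4\pi u_0^2$, so $-\Delta-12\pi u_0^2$ has a strictly negative ground-state eigenvalue in the $\ell=0$ sector and the translations $\partial_i u_0$ as $\ell=1$ zero modes, after which the strict monotonicity of the lowest eigenvalue in $\ell$ places all $\ell\ge2$ eigenvalues strictly above zero; the delicate part is making this rigorous for the exterior-matching (transmission) realization rather than for a Dirichlet ball. For the second, after the rescaling $u_0(t)=\lambda\,\theta(\xi_1 t)$ to the standard $n=3$ Lane--Emden function $\theta$ with first zero $\xi_1$, the condition $\Theta\neq0$ reduces to the concrete profile inequality $\int_0^{\xi_1}\theta\,\xi^2\,d\xi\neq-\frac16\xi_1^4\,\theta'(\xi_1)$, which does hold for the $n=3$ profile. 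I expect the $\ell\ge2$ non-degeneracy to be the main obstacle, while the Fredholm reduction, the $\ell=0$ Green's-identity computation, and the final implicit-function-theorem step are routine.
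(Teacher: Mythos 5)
Your overall strategy is the paper's: reduce everything to injectivity of the Fredholm-index-zero operator $\frac{\partial\F}{\partial(\rho,\kappa)}(\rho_0,0,\alpha_0)$, project the kernel equation onto the radial mode, and test it against the scaling generator $v=u_0+ru_0'$ (your $\phi$), which satisfies $\Delta v+12\pi u_0^2v=0$, $v'(1)=0$, $\int_{B_1}u_0^2v\,dx=0$. Your Green's identity in the $\ell=0$ sector, your boundary data $\phi(1)=-M$, $\phi'(1)=0$, and your value $w_0'(1)=\tfrac43\delta\kappa$ all match the paper's computation (the paper just first subtracts $\tfrac23\delta\kappa|x|^2$ so that the boundary terms vanish and the obstruction appears as $\int_{B_1}u_0^2v\,|x|^2\,dx$ rather than as your $\Theta$; the two scalars differ by a multiple of the known quantity $\int u_0^2v\,dx=0$, so they encode the same condition).

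The genuine gap is that you never prove the one fact on which the whole proposition rests: $\Theta\neq 0$. You reduce it to a ``concrete profile inequality'' for the $n=3$ Lane--Emden function and assert that it ``does hold,'' which is exactly the kind of unverified numerical claim a proof cannot rest on --- especially here, where the $\gamma=4/3$ scaling degeneracy makes it entirely plausible a priori that such a constant could vanish. The paper closes precisely this point with a soft argument: $v'=-4\pi r u_0^3<0$, so $v$ decreases and, by $\int_{B_1}u_0^2v\,dx=0$, changes sign exactly once, whence $\int_{B_1}u_0^2v\,|x|^2\,dx<0$ strictly (weighting by $|x|^2$ pushes mass to where $v<0$). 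Your $\Theta$ can also be evaluated in closed form: integrating by parts, $u_0^2\phi=u_0^3+\tfrac r3(u_0^3)'$ gives
\begin{equation*}
\Theta \;=\; 8\pi\int_{B_1}u_0^2\,|x|^2\,\phi\,dx \;=\; -\tfrac{16\pi}{3}\int_{B_1}|x|^2\rho_0(x)\,dx\;<\;0,
\end{equation*}
so the claim is true, but as written your proof is incomplete at its decisive step. A secondary, smaller issue: you flag the $\ell\ge 2$ non-degeneracy of the transmission problem as ``the main obstacle'' and offer only a spectral sketch; the paper disposes of the whole $\delta\rho$-part (once $\delta\kappa=0$) by invoking Lemma 4.3 of \cite{strauss2017steady}, so that step is not where the difficulty lies --- but your sketch there would also need to be made rigorous before the argument is complete.
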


\begin{proof}
We just need to prove the bijectivity.     We compute the derivative of $\F$ as follows, recalling that 
$u_0=\rho_0^{1/3} = \left[\frac{1}{|\cdot|}*\rho_0(x)-\frac{1}{|\cdot|}*\rho_0(0)+\alpha_0\right]_+$.   
\begin{align}\label{eq: ker 1}
&~\frac{\partial \F_1}{\partial (\rho,\kappa)}\bigg|_{(\rho,\kappa,\alpha)=(\rho_0,0,\alpha_0)}(\delta\rho,\delta\kappa)\notag\\
=&~\delta\rho - 3u_0^2\left[\frac1{|\cdot|}*\delta\rho(x)-\frac1{|\cdot|}*\delta\rho(0)+\delta\kappa r^2(x)\right],
\end{align}
\eqn\label{eq: ker 2}
\frac{\partial \F_2}{\partial (\rho,\kappa)}\bigg|_{(\rho,\kappa,\alpha)}(\delta\rho,\delta\kappa) = \int_{B_2}\delta\rho(x)~dx.
\eeqn
This derivative is a compact perturbation of the identity  and thus is Fredholm of index zero.   
Hence we merely need to show it is injective.   
To that end, let us assume that \eqref{eq: ker 1} and \eqref{eq: ker 2} both vanish.  
Denoting 
\eqn
\varphi(x) = \frac{1}{|\cdot|}*\delta\rho(x)-\frac{1}{|\cdot|}*\delta\rho(0)+\delta\kappa r^2(x),   
\eeqn
we then have  
\eqn\label{eq: 4}
\Delta \varphi = -4\pi \delta \rho+ 4\delta \kappa = -12\pi u_0^2\varphi +4\delta\kappa,
\eeqn 
and 
\eqn\label{eq: 5}
\int_{B_1}u_0^2\varphi ~dx=0.
\eeqn
We project \eqref{eq: 4} onto the radial component (integrating against 1 on $\mathbb{S}^2$), 
where $\varphi_{00}$ denotes the radial component of $\varphi$,  to obtain 
\eqn\label{eq: 6}
\Delta \varphi_{00} =  -12\pi u_0^2\varphi_{00} +4\delta\kappa,
\eeqn
while \eqref{eq: 5} naturally selects the radial component so that 
\eqn\label{eq: 7}
\int_{B_1}u_0^2\varphi_{00} ~dx=0.
\eeqn
If $\delta\kappa\ne 0$, we can divide \eqref{eq: 6} by it, and without loss of generality, we may assume $\delta\kappa=1$. Integrating \eqref{eq: 6} on $B_1$ and using \eqref{eq: 7}, we get
\eqn
4\pi \varphi_{00}'(1) = 4\frac{4\pi}{3},
\eeqn
\eqn
 \varphi_{00}'(1) =\frac43.
\eeqn
Then the function $u(|x|) = \varphi_{00}(|x|)-\frac23 |x|^2$ satisfies  
\eqn\label{eq: 10}
\Delta u + 12\pi u_0^2u = -8\pi u_0^2|x|^2  
\eeqn
and 
\eqn\label{eq: bdy u}
u'(1)=0.
\eeqn
Referring to the proofs of Lemma 4.3 and Lemma 4.7 in \cite{strauss2017steady} 
in the case that $\gamma=\frac43$, the radial function 
\eqn\label{eq: def va}
v(|x|) = \frac{\partial}{\partial \alpha}(\rho^\alpha(|x|))^{1/3} \bigg|_{\alpha=\alpha_0}= u_0(|x|)+ru_0'(|x|)
\eeqn
satisfies on $B_1$
\eqn\label{eq: va diff}
\Delta v + 12\pi u_0^2 v =0,
\eeqn
and
\eqn\label{eq: bdy va}
v'(1)=0,
\eeqn
\eqn\label{eq: 14}
\int_{B_1} u_0^2v~dx=0.
\eeqn
In fact, \eqref{eq: va diff} is a special case of (4.27) in \cite{strauss2017steady} 
(where $h^{-1}(s) = s^3$). (4.28) in \cite{strauss2017steady} shows the left hand side 
of \eqref{eq: bdy va} and that of \eqref{eq: 14} are the same. (4.45) in \cite{strauss2017steady} 
implies \eqref{eq: bdy va}, and finally \eqref{eq: def va} follows from (4.44) 
in \cite{strauss2017steady} (ignoring an irrelevant constant multiple).
We multiply \eqref{eq: 10} by $v$, multiply \eqref{eq: va diff} by $u$, and take the difference,  
obtaining 
\eqn\label{eq: pre green}
v\Delta u - u\Delta v = -8\pi u_0^2 |x|^2 v.
\eeqn
Integrating \eqref{eq: pre green} over $B_1$, using Green's identity and the boundary conditions \eqref{eq: bdy u} and \eqref{eq: bdy va}, we get
\eqn\label{eq: 15}
\int_{B_1}u_0^2v |x|^2~dx=0.
\eeqn

But notice that 
\eqref{eq: 14} and \eqref{eq: 15} contradict each other!  
Indeed, $v' = 2u_0' +ru_0''=2u_0'-4\pi |x|u_0^3<0$ for $|x|<1$. 
It follows from \eqref{eq: 14} that $u_0^2 v$ is positive near $0$ and negative near $\partial B_1$, 
and only switches sign once. Therefore \eqref{eq: 15} cannot hold. 
This contradiction implies that  $\delta\kappa=0$.   
Then the same argument as in the proof of Lemma 4.3 in \cite{strauss2017steady} 
shows that $\delta \rho=0$. 
\end{proof}

\noindent{\bf Acknowledgments.}  
YW is supported by NSF Grant  DMS-1714343.

\bibliographystyle{acm}
\bibliography{rotstarbiblio}

\end{document}